\documentclass[12pt,final]{amsart}
\usepackage{amsmath,amsthm}
\usepackage{amsfonts}
\usepackage{amssymb}
\usepackage{amscd}
\usepackage[dvipdfmx]{graphicx}
\usepackage{geometry}
\usepackage{color}
\usepackage{hyperref}

\newtheorem{theorem}{Theorem}[section]
\newtheorem{defn}{Definition}[section]
\newtheorem{lemma}{Lemma}[section]
\newtheorem{corollary}{Corollary}[section]

\newtheorem{proposition}{Proposition}[section]
\geometry{left=30mm,right=30mm,top=35mm,bottom=20mm}

\theoremstyle{definition}
\newtheorem*{remark}{Remark}
\newtheorem{example}{Example}[section]

\title[Discrete linear Weingarten surfaces with singularities]{Discrete linear Weingarten surfaces with singularities in Riemannian and Lorentzian spaceforms}                    
\author[W. Rossman]{Wayne Rossman}
\address[W. Rossman]{Department of mathematics, Faculty of science, Kobe University \newline Rokkodai-cho 1-1, Nada-ku, Kobe, 657-8501, Japan}
\email{wayne@math.kobe-u.ac.jp}
\author[M. Yasumoto]{Masashi Yasumoto}
\address[M. Yasumoto]{Department of mathematics, Faculty of science, Kobe University \newline Rokkodai-cho 1-1, Nada-ku, Kobe, 657-8501, Japan}
\curraddr[M. Yasumoto]{Institut f\"{u}r Mathematik, Universit\"{a}t T\"{u}bingen, Auf der Morgenstelle 10, 72076 T\"{u}bingen, Germany}
\email{myasu@math.kobe-u.ac.jp}

\date{}
\subjclass[2010]{Primary 53A10, Secondary 52C99}
\keywords{discrete differential geometry, surface theory, integrable systems}

\begin{document}
\maketitle
\normalsize

\begin{abstract}
In this paper we define and analyze singularities of discrete linear Weingarten surfaces with Weierstrass-type representations in $3$-dimensional Riemannian and Lorentzian spaceforms. In particular, we discuss singularities of discrete surfaces with non-zero constant Gaussian curvature, and parallel surfaces of discrete minimal and maximal surfaces, and discrete constant mean curvature $1$ surfaces in de Sitter 3-space, including comparisons 
with different previously known definitions of such singularities.  
\end{abstract}

\section{Introduction}

In this paper we examine discrete surfaces with Weierstrass-type representations in spaceforms, taking advantage of 
the more general setting of Lie sphere geometry and 
discrete Legendre immersions (see Definition \ref{LegLiftDefn} here), and with helpful motivations coming from the developing field of $\Omega$ surfaces. There are numerous Weierstrass-type representations in 3-dimensional spaceforms in addition to the classical representation for minimal surfaces in $\mathbb{R}^3$, for example, for 
\begin{enumerate}
\item maximal surfaces (spacelike immersion with mean curvature identically 0) in Minkowski 3-space $\mathbb{R}^{2,1}$ by Kobayashi \cite{Ko}, 
\item constant mean curvature (CMC, for short) 1 surfaces in hyperbolic 3-space $\mathbb{H}^3$ by Bryant \cite{Bry} (see also \cite{UY-CMC1}), 
\item flat surfaces in $\mathbb{H}^3$ by G\'alvez,  Mart\'{\i}nez, Mil\'an \cite{GMM-first}, 
\item CMC 1 surfaces in de Sitter 3-space $\mathbb{S}^{2,1}$ by Aiyama, Akutagawa \cite{AA}, 
\item linear Weingarten surfaces of Bryant type (BrLW surfaces, for short) in $\mathbb{H}^3$ by G\'alvez,  Mart\'{\i}nez, Mil\'an \cite{GMM-second}, 
\item linear Weingarten surfaces of Bianchi type (BiLW surfaces, for short) in $\mathbb{S}^{2,1}$ by Aledo, Espinar \cite{AE}. 
\end{enumerate}
Regarding the last two examples above, 
Izumiya and Saji \cite{IS} showed that a necessary and sufficient condition for an immersion in $\mathbb{H}^3$ to be BrLW is that its unit normal vector field is BiLW (see \S \ref{BrBi}).

Recently, there has been work on discretization of the above representations. Bobenko, Pinkall \cite{BobPink2} described discrete isothermic surfaces in the Euclidean 3-space $\mathbb{R}^3$, and as an application, they derived the Weierstrass representation for discrete isothermic minimal surfaces in $\mathbb{R}^3$, using integrable systems techniques. In the same vein,  Hertrich-Jeromin \cite{Udo1} gave the Weierstrass-type representation for discrete isothermic CMC 1 surfaces in $\mathbb{H}^3$.

Burstall, Hertrich-Jeromin and the first author \cite{BHR3} described discrete linear Weingarten surfaces in any 3-dimensional spaceform using Lie sphere geometry, which we briefly introduce in \S \ref{secfive}. 
Using that method, we can treat discrete linear Weingarten surfaces in 
any 3-dimensional spaceform.  
They did not consider singularities of discrete surfaces, however, as we will do here.  

Returning to smooth surfaces, unlike the minimal and (non-zero) CMC surfaces in  $\mathbb{R}^3$, general linear Weingarten surfaces will have singularities. In fact, singularities of maximal surfaces in $\mathbb{R}^{2,1}$, flat and BrLW surfaces in $\mathbb{H}^3$, and BiLW surfaces in $\mathbb{S}^{2,1}$ are investigated in \cite{FSUY, KU, UY-max}. Thus, it is natural to still consider singularities when discretizing these surfaces. However, difficulties occur with this (Definition \ref{defnOfSingVert}), and overcoming those difficulties is our primary task here.  

Hoffmann, Sasaki, Yoshida and the first author \cite{HRSY} described discrete 
BrLW surfaces in $\mathbb{H}^3$, and 
furthermore treated singularities of discrete flat surfaces in $\mathbb{H}^3$. For that, they considered the behavior of caustics of 
smooth flat surfaces at a singular 
point, via the Weierstrass-type representation. Such a caustic 
contacts the surface at a singular point, which 
lead to a natural definition of 
singularities in the discrete case, i.e. that a singularity of a discrete flat surface is a vertex that contacts the (discrete) caustic. We will define singular vertices in a more direct way that applies to a wider 
variety of discrete surfaces, and show equivalence of the definitions 
in the case considered just above (Theorem \ref{SingVertFlat}).  

The second author \cite{Yashi2014} described discrete maximal surfaces in the Minkowski $3$-space $\mathbb{R}^{2,1}$ and analyzed their singular faces, that is, non-spacelike faces (Definition \ref{defnOfSingFace}). This is also a natural way to define singular behavior, because the tangent plane of a smooth maximal surface is non-spacelike precisely at singular points. 

Thus, singularities of discrete surfaces could be either vertices or faces, and two of our primary results here are about relating those two viewpoints, in particular, in the cases of discrete maximal surfaces in $\mathbb{R}^{2,1}$ and discrete CMC 1 surfaces in  $\mathbb{S}^{2,1}$. 

Smooth $2$-dimensional Legendre immersions in Lie sphere geometry project to surfaces in spaceforms that can 
have singularities.  However, those surfaces considered 
together with their unit normal maps become immersions (by definition), and they are called fronts. The most typical singularities on fronts are cuspidal edges of $3/2$ type, and next perhaps are swallowtails.  At such 
singularities, exactly one of the principal curvatures will diverge (see \cite{tera2}), and equivalently, one of the principal curvature spheres will become a point sphere. Using that the notion of principal curvature spheres in Lie sphere geometry is independent of the choice of projection to a $3$-dimensional spaceform, we  define singular vertices on projections of 
discrete Legendre immersions.

While typical singularities on smooth surfaces 
can be found by locating the points where one principal curvature blows out to infinity, 
on discrete surfaces the principal curvatures are discrete functions from the set of edges to the real numbers, and thus we can only identify the vertices at which a principal curvature changes sign.  As a result, it is not so immediate to distinguish the points that are singular from the points that are parabolic (at which exactly one of the two bounded principal curvature becomes zero) or flat (at which both principal curvatures become zero). This is why we will use a particular terminology ``FPS vertices'' in Definition \ref{defnOfSingVert}. This is the first of our three goals:
\begin{enumerate}
\item We will find and examine cases where the 
distinction between singular and parabolic or 
flat points is possible.  Such cases include surfaces of 
constant Gaussian curvature (CGC) $K \neq 0$ (see \S \ref{SingVertCGC}), and some particular discrete linear Weingarten surfaces for which Weierstrass 
type representations exist (\S \ref{secseven}, \S \ref{seceight}, \S \ref{secnine}).  
\item We will confirm that the discrete Weierstrass type 
representations are compatible with other ways of defining 
discrete surfaces with specific curvature properties.  In 
particular, they are compatible with the definitions given by 
Burstall, Hertrich-Jeromin and the first author in \cite{BHR3} 
(Proposition \ref{ConsQuanBrBi}).  
\item We will find relationships between singular vertices and singular faces in particular cases (Theorem \ref{thm8pt4}, Theorem \ref{thm7pt2next}, Corollary \ref{SingFaceVert}).  
\end{enumerate}

\section{Discrete Legendre immersions}

First we recall smooth Legendre immersions in the context 
of Lie sphere geometry, that is, maps $\Lambda$ of $2$-manifolds 
$M^2$ into the collection of null planes in $\mathbb{R}^{4,2}$, 
with metric signature $(-,+,+,+,+,-)$, i.e. 
\[
(X,Y)_{\mathbb{R}^{4,2}}=(X,Y):=-x_1y_1+x_2y_2+x_3y_3+x_4y_4+x_5y_5-x_6y_6
\]
for $X=(x_1,x_2,x_3,x_4,x_5,x_6)^t, \ Y=(y_1,y_2,y_3,y_4,y_5,y_6)^t \in \mathbb{R}^{4,2}$. Then
\[
\mathbb{L}^5:=\{ X \in \mathbb{R}^{4,2} | (X,X)=0 \}
\]
denotes the light cone of $\mathbb{R}^{4,2}$.

Let $\Lambda \subset \mathbb{L}^5$ be a $2$-dimensional null subspace, which projectivizes to a line in the projectivized light cone $\mathbb{PL}^5$ called a {\em contact element}.  
This line will represent a family of spheres (a pencil) that are all tangent 
(with same orientation) at one point.

If $\Lambda$ is a (smooth) 
map from $M=M^2$ to the collection of null planes in $\mathbb{R}^{4,2}$, 
where $M$ is a $2$-dimensional 
manifold, then $\Lambda$ is a {\em Legendre immersion} if,  
\begin{enumerate}
\item for any pair of sections $X_1$, $X_2$ of $\Lambda$, 
\[
dX_1 \perp X_2 \quad \text{(contact condition), and }
\]
\item for any $m \in M$ and any choice of $Y 
\in T_m M$, $dX(Y) \in \Lambda(m)$ for all sections $X$ of $\Lambda(m)$ 
implies $Y=0$ (immersion condition).  
\end{enumerate}

The immersion condition can be 
restated in terms of a basis of sections for the null planes 
$\Lambda$ as follows: If 
\[ 
\Lambda = \text{span}\{ X_1,X_2 \} \ , 
\]
with basis $X_1,X_2: M^2 \to \mathbb{L}^5$, then the immersion condition is equivalent to 
\[ 
dX_1(Y) , dX_2(Y) \in \Lambda(m) \;\;\; \text{implies} \;\;\; Y=0 
\]
for all $Y \in T_m M$, and one can then check that this condition is independent of the choice of basis $X_1,X_2$.

By choosing two nonzero perpendicular vectors $p,q$ 
in $\mathbb{R}^{4,2}$ ($p$ not null), we can project $\Lambda$ to a surface $f: M^2 \to M^3$ in the $3$-dimensional spaceform 
\[ 
M^3 = M^3_{p,q} := \{ X \in \mathbb{R}^{4,2} \, | \, (X,X)=(X,p)=0, 
(X,q)=-1 \}  
\]
with sectional curvature $-(q,q)$, by taking $f \in \mathrm{Sec} (\Lambda)$ such that 
\begin{equation}\label{extra1}
(f,p)=0 \;\;\; \text{and} \;\;\; (f,q)=-1 \ , 
\end{equation}
where $\mathrm{Sec} (\Lambda)$ denotes the set of all sections of $\Lambda$. Note that, when we choose a constant timelike (resp. spacelike) vector $p \in \mathbb{R}^{4,2}$ and a constant vector $q \in \mathbb{R}^{4,2}$, $M^3$ becomes a $3$-dimensional Riemannian (resp. Lorentzian) spaceform. For details, see \cite{wisky-next}, and for a particular choice of $p$ and $q$, see Section \ref{BrBi}.

Let $n$ denote the unit normal to $f$ in $M^3$, i.e. $n \in \mathrm{Sec} (\Lambda)$ 
and
\[
(n,q)=0 \;\;\; \text{and} \;\;\; (n,p)=-1. 
\]
The sections of $\Lambda=\text{span}\{ f,n \}$ represent the sphere 
congruences of $f$, and then $f$, resp. $n$, is the 
point sphere, resp. tangent geodesic plane, 
congruence.  Let $s_\alpha$ for $\alpha=1,2$ be 
sections of $\Lambda$ that represent the principal 
curvature sphere congruences, which can be defined by \[ s_\alpha= 
\kappa_\alpha f + n \] using the principle curvatures $\kappa_\alpha$ 
of $f$, or equivalently by the directional derivative 
conditions that $D_{\vec v_\alpha} s_\alpha \in \mathrm{Sec} (\Lambda)$ for some tangent vector fields $\vec v_\alpha$ on $M^2$.

For $\Lambda$ above to be a Legendre immersion, both immersion and contact conditions must be satisfied.  
For a discrete Legendre map $\Lambda$ as in Definition \ref{LegLiftDefn} below, discretized versions of the immersion and contact conditions are needed. We also assume the existence of  ``discrete curvature line  coordinates'', that is, we require that the four vertices of each quadrilateral be concircular, which is called a {\em principal net}.  In this way, the properties of smooth Legendre immersions motivate the following 
definition of discrete Legendre immersions:

\begin{defn}[\cite{BHR3}] \label{LegLiftDefn}
A map 
\[
\Lambda: \mathbb{Z}^2 \; 
\text{(or some subdomain of } \mathbb{Z}^2 \text{)} \; 
\to \{ \text{null planes in } \mathbb{R}^{4,2} 
\} \]
is a {\em discrete Legendre immersion} if, for any quadrilateral, 
with vertices $i,j,k,\ell$ 
ordered counterclockwise about the quadrilateral 
and with $i$ in the lower left corner in $\mathbb{Z}^2$, and with corresponding surface vertices $f_i$, $f_j$, $f_k$, $f_l$ defined like in \eqref{extra1}, 
\begin{enumerate}
\item (principal net condition) 
$\dim (\text{span}\{ f_i,f_j,f_k,f_\ell \}) = 3$, 
\item (first immersion condition) There exist $p,q$ 
such that the difference of any two of 
$f_i,f_j,f_k,f_\ell$ is non-null, 
\item (second immersion condition) For some $p,q$ as 
in item (2) above, 
$f_k-f_i$ and $f_\ell-f_j$ are not parallel, 
\item (contact condition) $\Lambda_i \cap \Lambda_j 
\neq \{ \vec 0 \}$, $\Lambda_i \cap \Lambda_\ell 
\neq \{ \vec 0 \}$.  
\end{enumerate}
\end{defn}

\begin{remark}
Item (1) in Definition \ref{LegLiftDefn} and 
$(f_*,q)=-1$ imply 
\[
f_i,f_j,f_k,f_\ell\]
all lie in some $2$-dimensional plane.  Item (3) implies 
any two or three vertices amongst 
$f_i,f_j,f_k,f_\ell$ span a $2$ or $3$ dimensional 
subspace of $\mathbb{R}^{4,2}$, respectively, with 
nondegenerate induced metric $(+,-)$ or $(+,+,-)$.  
\end{remark}

\section{FPS vertices of projections of discrete 
Legendre immersions} 

Generically, a smooth surface (section) $f \in \mathrm{Sec} (\Lambda)$ will 
have a singularity 
when one of the 
principal curvature spheres $s_\alpha$ becomes a point 
sphere \cite{tera2}, i.e. when $s_\alpha \perp p$ for $\alpha = 1$ or 
$2$.  Also, where $f$ does not have a singularity, it will 
have a parabolic or flat point if one of the $s_\alpha$ becomes a 
tangent geodesic plane, i.e. $s_\alpha \perp q$.  

In the case of discrete Legendre immersions, 
the domain becomes $\mathbb{Z}^2$, or some subdomain of 
$\mathbb{Z}^2$, rather than $M^2$.
We define the curvature spheres as those 
spheres represented by nonzero vectors (\cite{BHR3})
\[ 
s_1 \in \Lambda_i \cap \Lambda_j \;\;\; \text{and} \;\;\; s_2 \in 
\Lambda_i \cap \Lambda_\ell \ . 
\]
Thus we have spheres in 
$M^3$, associated to edges, that lie in both of 
the sphere pencils defined at the two endpoints of 
the edges.  In particular the normal geodesics 
(i.e. the geodesics through the vertices and 
perpendicular to the spheres in the sphere pencils) 
emanating from the adjacent vertices, when they 
do intersect, will intersect at equal distances 
from the two vertices.  

Thus, $s_1=s_{(m,n),(m+1,n)}$ 
will be defined on horizontal edges from vertex 
$i=(m,n) \in \mathbb{Z}^2$ to vertex $j=(m+1,n) \in 
\mathbb{Z}^2$ as the representative 
(for a sphere) that is common to both the 
null planes $\Lambda_i$ and $\Lambda_j$, 
and $s_2=s_{(m,n),(m,n+1)}$ is defined analogously on vertical edges from 
$i$ to $\ell=(m,n+1)$.  We then define the principal curvatures by (\cite{BPW-MathAnn}, \cite{BHR3})
\begin{equation}\label{kappadefn} \kappa_{ij}=\frac{(s_{1},q)}{(s_{1},p)} \; , \;\;\; 
\kappa_{i\ell}=\frac{(s_{2},q)}{(s_{2},p)} \; . \end{equation}

As the principal curvature spheres $s_\alpha$ 
and principal curvatures $\kappa_{\alpha\beta}$ 
are defined on edges, not vertices, we lose the ability 
to look for points in the domain where $s_\alpha$ is 
exactly perpendicular to $p$ or $q$.  Thus we 
reformulate the conditions for singularities and 
parabolic or flat points by finding vertices in the domain 
at which the $\kappa_{\alpha \beta}$ change sign 
in at least one direction:  

\begin{defn}\label{defnOfSingVert}
For a $\Lambda$ as in Definition \ref{LegLiftDefn}, 
together with a choice of spaceform determined from a choice of $p$ and $q$, we say that $(m,n)$ is a {\em flat-or-parabolic-or-singular (FPS) vertex} if 
\[ \kappa_{(m-1,n),(m,n)} \cdot \kappa_{(m,n),(m+1,n)} \leq 0  \;\;\; 
\text{or} \;\;\;  
\kappa_{(m,n-1),(m,n)} \cdot \kappa_{(m,n),(m,n+1)} \leq 0 \; . \] 
\end{defn}

\begin{figure}
\begin{center}
\includegraphics[width=150mm]{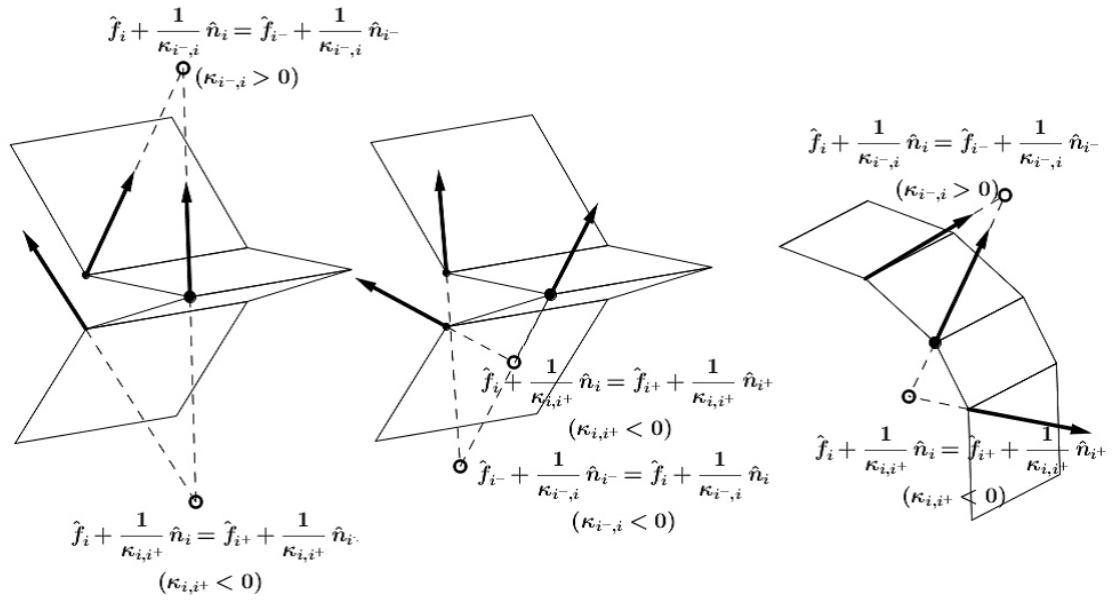}
\end{center}
\caption{Examples of FPS vertices on the left and right, and a non-example in the middle. The figure on the left shows a situation we should regard as a singularity, and the figure on the right shows what should be regarded as a flat or parabolic point.  The figure in the middle is neither.  Here, if $i=(m,n)$, 
then we have either $i^+=(m+1,n)$, $i^-=(m-1,n)$ or $i^+=(m,n+1)$, $i^-=(m,n-1)$.}
\end{figure}

When both $p$ and $q$ are non-null, switching $p$ and 
$q$ will result in the projected surface $f$ changing to its Gauss map $n$. In the smooth case, generically, a 
parabolic or flat point on one of the two surfaces corresponds to a singular point on the other, thus it is not surprising that these notions appear together in Definition \ref{defnOfSingVert}.  

In certain special cases, we can distinguish 
the singular points from the parabolic or flat points, which we will see here.  

As another approach for considering singularities on discrete surfaces, motivated by the second author's 
work \cite{Yashi2014}, we can define singular faces.  We come back to this in Definition \ref{defnOfSingFace},   
and examine criteria for singular faces, and also their 
relationships with singular vertices in some special cases.

\section{Smooth linear Weingarten surfaces of Bryant and Bianchi type in $\mathbb{H}^3$ and $\mathbb{S}^{2,1}$}\label{BrBi}

We include this section to motivate the discretizations in \S \ref{secfive}.
In $\mathbb{R}^{3,1}$ with signature $(-,+,+,+)$, with points $(x_0,x_1,x_2,x_3) \in \mathbb{R}^{3,1}$ described in matrix form as 
\[
X = 
\begin{pmatrix}
x_0 + x_3 & x_1 - i x_2 \\ x_1+ i x_2 & x_0 - x_3
\end{pmatrix} \; , 
\]
the metric is 
$\displaystyle \langle X, Y \rangle = \frac{-1}{2} \mathrm{tr} \left( 
X \begin{pmatrix} 0 & -i \\ i & 0 \end{pmatrix} Y^t \begin{pmatrix} 0 & -i \\ i & 0 \end{pmatrix} \right) $. We define 
\begin{eqnarray*}
&&\mathbb{H}^3 := \{ X \in \mathbb{R}^{3,1} | \langle X,X \rangle =-1 \} =\{ \pm F \overline{F}^t | F \in \mathrm{SL}_2 \mathbb{C}  \} , \\
&&\mathbb{S}^{2,1} := \{ X \in \mathbb{R}^{3,1} | \langle X,X \rangle =1 \} =\{ F \begin{pmatrix} 1 & 0 \\ 0 & -1 \end{pmatrix} \overline{F}^t | F \in \mathrm{SL}_2 \mathbb{C}  \} .
\end{eqnarray*}

We call a surface $\hat{f}$ in $\mathbb{H}^3$ a  {\it linear Weingarten surface of Bryant type} ({\it BrLW surface}, for short) if $\hat{f}$ satisfies
\begin{equation}\label{BrLWeqn}
2 t (H_{\hat f}-1) + (1-t) (K_{ext,\hat f}-1) = 0  , 
\end{equation}
where $K_{ext,\hat{f}}$ and $H_{\hat{f}}$ are the extrinsic Gaussian and mean curvatures of $\hat{f}$ with respect to $\mathbb{H}^3$, 
and we call a surface $\hat{n}$ in $\mathbb{S}^{2,1}$ a  {\it linear Weingarten surface of Bianchi type} ({\it BiLW surface}, for short) if $\hat{n}$ satisfies
\begin{equation}\label{BiLWeqn}
2 t (H_{\hat n}-1) - (1+t) (K_{ext,\hat n}-1) = 0 . 
\end{equation}

Solving 
\[
dE = E \begin{pmatrix}
0 & g^\prime \\ (g^\prime)^{-1} & 0 
\end{pmatrix} dz
\]
for $E \in \text{SL}_2(\mathbb{C})$, where $g$ is a holomorphic function 
with nonzero derivative $g^\prime = \partial_z g$ on a Riemann 
surface $M^2$ with local coordinate $z$, we take, for any 
constant $t \in \mathbb{R}$,  

\begin{equation}
\left\{
\begin{split}
L = \begin{pmatrix}
0 & \sqrt{ \mathcal{T} } \\ 
\frac{-1}{ \sqrt{ \mathcal{T} } } & 
\frac{-t \bar g}{ \sqrt{ \mathcal{T} } }
\end{pmatrix} \quad (\mathcal{T}:=1+t g \bar{g}), \\ 
\hat f =\mathrm{sgn(\mathcal{T})} E L \overline{E L}^t, \ \  
\hat n = \mathrm{sgn(\mathcal{T})} E L \begin{pmatrix}
1 & 0  \\ 0 & -1 \end{pmatrix}
\overline{E L}^t , 
\end{split}
\right.
\end{equation}  
making the genericity assumption $\mathcal{T} \neq 0$. 

Then $\hat{f}$ is a BrLW surface in $\mathbb{H}^3$ with unit normal vector field $\hat{n}$, since $\langle \hat{f},\hat{n} \rangle = \langle d\hat{f},\hat{n} \rangle = 0$. Moreover, $\hat{n}$ is a BiLW surface in $\mathbb{S}^{2,1}$. Here we outline a proof of this.

The three fundamental forms of $\hat f$ become, with $h := |g^\prime|^{-2} {\mathcal{T}}^{-2}$, 
\begin{eqnarray*} 
&&I = 
h \left\{ ((1-t) |g^\prime|^2 + {\mathcal{T}}^2)^2 
dx^2 + ((1-t) |g^\prime|^2 - {\mathcal{T}}^2)^2 
dy^2 \right\} \; ,\\
&&II = 
- h \left\{ (|g^\prime|^4 - (t |g^\prime|^2 - 
{\mathcal{T}}^2)^2) dx^2 + (|g^\prime|^4 - 
(t |g^\prime|^2 + 
{\mathcal{T}}^2)^2) dy^2 \right\} \; , \\
&&III = 
h \left\{ ((1+t) |g^\prime|^2 - {\mathcal{T}}^2)^2 
dx^2 + ((1+t) |g^\prime|^2 + {\mathcal{T}}^2)^2 
dy^2 \right\} \; . 
\end{eqnarray*}
The principal curvatures of $\hat f$ and $\hat n$ are then 
\begin{eqnarray*}
&&k_{1,\hat f} = - \frac{(1+t) |g^\prime |^2
-\mathcal{T}^2 }{(1-t)|g^\prime |^2+ \mathcal{T}^2 } \ ,\   
k_{2,\hat f} = - \frac{(1+t) |g^\prime |^2+ \mathcal{T}^2 }{(1-t) |g^\prime |^2-\mathcal{T}^2 } , \\
&&k_{1,\hat n} = \frac{1}{k_{1,\hat f} } , \ 
k_{2,\hat n} = \frac{1}{k_{2,\hat f} }  , \ \ H_{\hat f} = \frac{H_{\hat n}}{K_{ext,\hat n}}\ , \ \    
K_{ext,\hat f} = \frac{1}{K_{ext,\hat n}} \ ,
\end{eqnarray*} 
and so $\hat{f}$ satisfies Equation \eqref{BrLWeqn} and $\hat n$ satisfies Equation \eqref{BiLWeqn}.
In fact, all BrLW and BiLW surfaces without umbilics ($g^\prime$ would be 
zero at umbilics) can be constructed this way, using 
holomorphic functions $g$.  

Thus sufficient conditions for $\hat f$ and $\hat n$, respectively, to have 
singularities are 
\[ \mathcal{T}^4 = (1-t)^2 |g^\prime|^4 \; , \;\;\; 
\mathcal{T}^4 = (1+t)^2 |g^\prime|^4 \; , \]
respectively.  For certain special values of $t$ these conditions simplify 
as follows:
\begin{eqnarray*}
&&\hat f \;\; \text{with} \;\; t=0 : \;\;\;\; |g^\prime|=1, \\
&&\hat n \;\; \text{with} \;\; t=0 : \;\;\;\; |g^\prime|=1, \\
&&\hat f \;\; \text{with} \;\; t=1 : \;\;\;\; \text{null condition}, \\
&&\hat n \;\; \text{with} \;\; t=-1 : \;\;\;\; |g|=1 . 
\end{eqnarray*}

Because $\hat{f}$ and $\hat{n}$ are smooth well-defined maps that can have singularities, it is natural to lift to Lie sphere geometry 
in $\mathbb{R}^{4,2}$, with
\begin{equation}\label{extra2}
f =(\hat f,1,0)^t \; , \;\;\; n=(\hat n,0,1)^t 
\end{equation}
determined by 
\[ p=(0,0,0,0,0,1)^t \; , \;\;\; q=(0,0,0,0,-1,0)^t \; . \]  For a BrLW 
surface $\hat f \in \mathbb{H}^3=M_{p,q}^3$ with BiLW normal 
$\hat n \in \mathbb{S}^{2,1}=M_{q,p}^3$, we can define the Legendre lift 
$\Lambda = \text{span}\{ s_+,s_- \}$ for 
\[ s_\pm = b_\pm f+n \;\;\; 
\text{with} \;\;\; b_+=1 \;\;\; \text{and} \;\;\; b_-=\frac{t+1}{t-1} \; , \]
and then $s_\pm$ have constant conserved quantities 
\[ q_+=(0,0,0,0,1,1)^t \; , \;\;\; q_-=(0,0,0,0,t-1,t+1)^t \; . \]
in the sense that $(s_\pm,q_\pm)=0$, equivalently the equations 
$\Gamma^\pm q_\pm = 0$ for the associated families of flat connections 
hold (see \cite{BHR2}, \cite{wisky-next}).  
Furthermore, because $b_\pm$ are constant and because the elements $g_{ij}$ of the first fundamental form of $\hat{f}$ satisfy (using Equation \eqref{BrLWeqn}) 
\[
\pm \frac{\sqrt{g_{11}}}{\sqrt{g_{22}}} = \frac{1-\kappa_2}{1-\kappa_1} 
= \frac{-t-1+ (t-1) \kappa_2}{t+1- (t-1) \kappa_1} \ , 
\]
all of Equations (4.5) and (4.10) and (4.11) in \cite{wisky-next} hold, and 
so $s_\pm$ are isothermic sphere congruences.  Thus $\Lambda$ 
is an $\Omega$ surface with a pair of constant conserved quantities.  

Conversely, if we start with an $\Omega$ surface with constant 
conserved quantities $q_\pm$ for isothermic sphere congruences 
$s_\pm = b_\pm f+n$ respectively, we can reverse the above arguments 
to see that we obtain a BrLW surface $\hat f$ 
with BiLW normal $\hat n$ in the spaceforms $M_{p,q}^3$ and $M_{q,p}^3$, 
with $p$ and $q$ as above.  

This proves the next lemma, which was already understood in \cite{BHR2}:

\begin{lemma}[\cite{BHR2}]\label{BRandBiLemma}
All smooth BrLW and BiLW surfaces in $\mathbb{H}^3$ and $\mathbb{S}^{2,1}$ are projections of $\Omega$ 
surfaces with constant conserved quantities, at least one of which is lightlike. Conversely, for any smooth $\Omega$ surface with constant conserved quantities\footnote{We assume $q_+$, $q_-$ are not parallel, and that $\text{span} \{ q_{\pm} \}$ is not a null plane.} $q_\pm$, at least one of which is lightlike, its projections $\hat f$ and $\hat n$ given by choosing $p,q \in \text{span} \{ q_\pm \}$ are BrLW and BiLW surfaces, respectively.  
\end{lemma}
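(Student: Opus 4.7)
The plan is to prove both directions by translating between the Weierstrass parametrization and the $\Omega$-surface data in Lie sphere geometry. For the forward direction, I would first handle the non-umbilic case, where the Weierstrass representation in terms of a holomorphic $g$ applies. After lifting $\hat f$ and $\hat n$ to $\mathbb{R}^{4,2}$ via \eqref{extra2}, I would form the candidate Legendre lift $\Lambda = \mathrm{span}\{s_+, s_-\}$ with $s_\pm = b_\pm f + n$, setting $b_+ = 1$ and $b_- = (t+1)/(t-1)$. A direct computation using the defining pairings for $p$ and $q$ shows that the constant vectors $q_+ = (0,0,0,0,1,1)^t$ and $q_- = (0,0,0,0,t-1,t+1)^t$ satisfy $(s_\pm, q_\pm) = 0$, so the $q_\pm$ are conserved quantities in the sense of \cite{BHR2, wisky-next}, and $q_+$ is visibly lightlike.

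The main obstacle is upgrading $s_\pm$ from mere curvature-sphere congruences to \emph{isothermic} ones, which is what actually makes $\Lambda$ an $\Omega$ surface. For this I would invoke the characterization of isothermic sphere congruences given by Equations (4.5), (4.10), (4.11) of \cite{wisky-next}. Because $b_\pm$ are constant, most of those conditions reduce to identities, and the remaining one boils down to
\[
\pm \frac{\sqrt{g_{11}}}{\sqrt{g_{22}}} = \frac{1-\kappa_2}{1-\kappa_1} = \frac{-t-1+(t-1)\kappa_2}{t+1-(t-1)\kappa_1},
\]
where the second equality is exactly the linear Weingarten relation \eqref{BrLWeqn} rewritten in terms of the principal curvatures $\kappa_i$ of $\hat f$. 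So once the coefficients of the first fundamental form of $\hat f$ are read off from the Weierstrass parametrization, it is the linear Weingarten condition itself that supplies the isothermic compatibility; umbilic points can then be absorbed by continuity, since the isothermic condition is closed under limits.

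For the converse, I would begin with an $\Omega$ surface $\Lambda$ carrying constant conserved quantities $q_+, q_-$ with at least one lightlike, choose $p, q$ of the required causal types in $\mathrm{span}\{q_+, q_-\}$, and project $\Lambda$ to $\hat f \in M_{p,q}^3 = \mathbb{H}^3$ and $\hat n \in M_{q,p}^3 = \mathbb{S}^{2,1}$. Since $q_\pm \in \mathrm{span}\{p,q\}$ are constant, the isothermic sphere congruences conserved by $q_\pm$ must have the form $s_\pm = b_\pm f + n$ with constant $b_\pm$, and these are the two principal curvature sphere congruences. Running the isothermic compatibility equation from \cite{wisky-next} in reverse then forces the same relation between $\sqrt{g_{11}}/\sqrt{g_{22}}$ and the principal curvatures displayed above, and rearranging yields \eqref{BrLWeqn} for $\hat f$ and \eqref{BiLWeqn} for $\hat n$.
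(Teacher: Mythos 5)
Your proposal follows essentially the same route as the paper's outline: lift the Weierstrass-parametrized $\hat f,\hat n$ via \eqref{extra2}, set $s_\pm=b_\pm f+n$ with $b_+=1$, $b_-=(t+1)/(t-1)$, verify $(s_\pm,q_\pm)=0$ with $q_+$ lightlike, and use the constancy of $b_\pm$ together with the identity $\pm\sqrt{g_{11}}/\sqrt{g_{22}}=(1-\kappa_2)/(1-\kappa_1)$ (equivalent to \eqref{BrLWeqn}) to invoke Equations (4.5), (4.10), (4.11) of \cite{wisky-next} and conclude that $s_\pm$ are isothermic, reversing the argument for the converse. The only departure is your continuity remark about umbilics, which the paper sidesteps by working away from umbilics throughout; otherwise the two arguments coincide.
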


The same result holds for general linear Weingarten $\hat f$ and $\hat n$, even without the condition that at least one of the $q_\pm$ is lightlike, again see \cite{BHR2}.  However, here we consider only the cases given in Lemma 
\ref{BRandBiLemma}.  

\section{Discrete surfaces with Weierstrass-type representations}\label{secfive}

First we give Weierstrass-type representations for discrete surfaces using the more symmetric form of the base equation as in \S 6 of \cite{HRSY}.  

Let $g:\mathbb{Z}^2 \rightarrow \mathbb{C}$ be a function satisfying
\[
cr (g_i,g_j,g_k,g_{\ell}):=\frac{(g_i-g_j)(g_k-g_{\ell})}{(g_j-g_k)(g_{\ell}-g_i)}=\frac{ \alpha_{ij} }{ \alpha_{i \ell} } <0 \ ,
\]
where $\alpha_{ij}$ (resp. $\alpha_{i\ell}$) is a scalar function defined on the horizontal edges (resp. vertical edges) and unchanging with respect to vertical (resp. horizontal) shifts. A complex-valued function $g$ satisfying the above condition is called a {\em discrete holomorphic function} and $\alpha_{ij}, \ \alpha_{i\ell}$ are called {\em cross ratio factorizing functions}. Now we assume the discrete analog of $g^\prime \neq 0$, i.e. $dg_{ij} := g_j-g_i \neq 0$ and $dg_{i\ell} \neq 0$ for all quadrilaterals.  We again make the genericity assumption 
\[
\mathcal{T}_i:=1+t g_i \overline{g_i} \neq 0 
\]
for all vertices $i$, for the chosen 
constant $t \in \mathbb{R}$.  Take $\lambda \in 
\mathbb{R}$ to be any non-zero constant so that $1-\lambda 
\alpha_{ij} \neq 0$ on all edges.  Solving 
\[
E_i^{-1} E_j = \frac{1}{\sqrt{1-\lambda \alpha_{ij}}} \begin{pmatrix}
1 & dg_{ij} \\ \frac{\lambda \alpha_{ij}}{dg_{ij}} & 1 
\end{pmatrix} 
\]
and the analogous equation with $j$ replaced by $\ell$, 
for $E_i \in \text{SL}_2\mathbb{C}$ for all $i$, and defining 
\[ 
L_i = \begin{pmatrix}
0 & \sqrt{ \mathcal{T}_i } \\ 
\frac{-1}{ \sqrt{ \mathcal{T}_i } } & 
\frac{-t \bar g_i}{ \sqrt{ \mathcal{T}_i } }
\end{pmatrix} , 
\]
the surface $\hat f$ and its normal $\hat n$ 
\[ 
\hat f_i =\mathrm{sgn}(\mathcal{T}_i) E_i L_i \overline{E_i L_i}^t , \quad 
\hat n_i =\mathrm{sgn(\mathcal{T}_i)} E_i L_i \begin{pmatrix}
1 & 0 \\ 0 & -1 
\end{pmatrix}
\overline{E_i L_i}^t ,
\]
we will see that these are discrete BrLW surfaces and BiLW surfaces in $\mathbb{H}^3$ and 
$\mathbb{S}^{2,1}$, respectively.  Direct computations confirm the following 
lemma: 

\begin{lemma}\label{princurv}
For any choice of $t$, we have the following: 
\begin{itemize}
\item $d\hat f_{ij} || d\hat n_{ij}$, $d\hat f_{i\ell} || d\hat n_{i\ell}$ in 
$\mathbb{R}^{3,1}$ for all edges $ij$, $i\ell$, and the principal curvatures $\kappa_{i*}$ satisfy 
\[
dn_{i*} = -\kappa_{i*} df_{i*} \ , 
\]
and furthermore 
\begin{equation}\label{lemmaEqn:aboutkappa}
\kappa_{i*} = \frac{-|dg_{i*}|^2 (1+t)+ 
(1+t|g_i|^2) (1+t|g_*|^2) \lambda \alpha_{i*}}{-|dg_{i*}|^2 (-1+t)+ 
(1+t|g_i|^2) (1+t|g_*|^2) \lambda \alpha_{i*}} \; , 
\end{equation}
for $*=j,\ell$. 

\item $1+t |g_i|^2 > 0$, resp. $1+t |g_i|^2 < 0$, if and only if $\hat f_i$ lies in the upper, resp. lower, sheet of $\mathbb{H}^3$.  

\item $\hat f_i$, $\hat f_j$, $\hat f_k$, $\hat f_\ell$ lie 
in a plane in $\mathbb{R}^{3,1}$, 
and thus are concircular in $\mathbb{H}^3$.  
\end{itemize}
\end{lemma}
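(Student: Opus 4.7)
The plan is to verify the three bullets by direct matrix computation, exploiting throughout that conjugation $X \mapsto AXA^*$ by $A \in \mathrm{SL}_2\mathbb{C}$ is a linear isometry of $\mathbb{R}^{3,1}$, and hence preserves parallelism, affine relations, and the trace/determinant features used to identify sheets of $\mathbb{H}^3$.

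For the first bullet, I would write $E_j = E_i M_{ij}$ with
\[ M_{ij} = (1-\lambda\alpha_{ij})^{-1/2} \begin{pmatrix} 1 & dg_{ij} \\ \lambda\alpha_{ij}/dg_{ij} & 1 \end{pmatrix}, \]
and factor
\[ d\hat f_{ij} = E_i \bigl(\mathrm{sgn}(\mathcal{T}_j)\, M_{ij} L_j L_j^* M_{ij}^* - \mathrm{sgn}(\mathcal{T}_i)\, L_i L_i^*\bigr) E_i^*, \]
\[ d\hat n_{ij} = E_i \bigl(\mathrm{sgn}(\mathcal{T}_j)\, M_{ij} L_j \sigma_3 L_j^* M_{ij}^* - \mathrm{sgn}(\mathcal{T}_i)\, L_i \sigma_3 L_i^*\bigr) E_i^* \]
with $\sigma_3 = \mathrm{diag}(1,-1)$. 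Since the outer conjugation preserves scalar proportionality of Hermitian matrices, it suffices to show the two bracketed inner expressions are real scalar multiples of one another. Expanding $L_j$ and $M_{ij}$ explicitly and collapsing the $\mathrm{sgn}(\mathcal{T}_*)$ factors against the square roots inside $L_*$ produces, after routine simplification, exactly the ratio $-\kappa_{ij}$ from \eqref{lemmaEqn:aboutkappa}. The same computation with $\ell$ in place of $j$ gives the result on vertical edges.

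For the second bullet, $\det \hat f_i = |\det(E_i L_i)|^2 = 1$ (since $\det E_i = \det L_i = 1$), so $\hat f_i \in \mathbb{H}^3 \cup (-\mathbb{H}^3)$. The matrix description of $\mathbb{R}^{3,1}$ from \S\ref{BrBi} identifies the upper sheet with $\mathrm{tr}(\hat f_i) > 0$, and
\[ \mathrm{tr}(\hat f_i) = \mathrm{sgn}(\mathcal{T}_i)\, \mathrm{tr}\bigl((E_i L_i)(E_i L_i)^*\bigr) = \mathrm{sgn}(\mathcal{T}_i)\, \|E_i L_i\|_F^2, \]
which has strictly positive absolute value because $E_i L_i$ is invertible. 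Hence $\hat f_i$ lies on the upper sheet exactly when $\mathcal{T}_i > 0$.

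For the third bullet, I would pull the four vertices back through the Lorentz transformation $X \mapsto E_i^{-1} X (E_i^{-1})^*$, reducing coplanarity of $\hat f_i, \hat f_j, \hat f_k, \hat f_\ell$ to coplanarity of
\[ L_i L_i^*,\ \mathrm{sgn}(\mathcal{T}_j \mathcal{T}_i)\, M_{ij} L_j L_j^* M_{ij}^*,\ \mathrm{sgn}(\mathcal{T}_k \mathcal{T}_i)\, (M_{ij} M_{jk}) L_k L_k^* (M_{ij} M_{jk})^*,\ \text{and the } \ell\text{-term}, \]
where the zero-curvature identity $M_{ij} M_{jk} = M_{i\ell} M_{\ell k}$ lets $M_{ik}$ be written unambiguously. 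The cross-ratio hypothesis $cr(g_i, g_j, g_k, g_\ell) = \alpha_{ij}/\alpha_{i\ell} < 0$ forces the $g_*$ to lie on a common circle in $\mathbb{C}$, and tracking this through the Hermitian expressions above produces, after translation by $L_i L_i^*$, the needed linear dependence among the three differences in $\mathbb{R}^{3,1}$. Once coplanarity is established, the intersection of the resulting affine $2$-plane with the quadric $\mathbb{H}^3$ is a conic that inherits the hyperbolic metric structure of a circle in $\mathbb{H}^3$. The main obstacle is the bookkeeping in this last step: carrying the sign factors $\mathrm{sgn}(\mathcal{T}_*)$ through the cross-ratio cancellation and verifying that the compatibility $M_{ij} M_{jk} = M_{i\ell} M_{\ell k}$ holds identically in $t, \lambda$, and $g$. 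The first bullet is purely algebraic once conjugation by $E_i$ is factored out, and the second is immediate from Frobenius-norm positivity.
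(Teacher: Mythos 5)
Your strategy for the first two bullets is sound and is essentially what the paper means by ``direct computations confirm the following lemma'' (the paper gives no written proof): conjugation by $E_i$ is a linear isometry of the Hermitian-matrix model of $\mathbb{R}^{3,1}$, so proportionality of $d\hat f_{ij}$ and $d\hat n_{ij}$ reduces to proportionality of the two bracketed inner expressions, and the $\mathrm{sgn}(\mathcal{T}_*)$ factors do collapse against $\sqrt{\mathcal{T}_*}\,\overline{\sqrt{\mathcal{T}_*}}=|\mathcal{T}_*|$ and $\sqrt{\mathcal{T}_*}/\overline{\sqrt{\mathcal{T}_*}}=\mathrm{sgn}(\mathcal{T}_*)$ so that $\mathrm{sgn}(\mathcal{T}_i)L_iL_i^*$ is rational in $\mathcal{T}_i$, $g_i$, $\bar g_i$. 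The second bullet is clean: $\det\hat f_i=1$ places $\hat f_i$ on the two-sheeted hyperboloid, and $\mathrm{tr}(\hat f_i)=\mathrm{sgn}(\mathcal{T}_i)\|E_iL_i\|_F^2$ pins down the sheet.

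The genuine gap is in the third bullet. Knowing that $g_i,g_j,g_k,g_\ell$ lie on a circle in $\mathbb{C}$ does not, by ``tracking this through the Hermitian expressions,'' yield coplanarity of the image vertices; as written there is no mechanism producing the linear dependence of $d\hat f_{ij}$, $d\hat f_{i\ell}$, $d\hat f_{ik}$, and the bookkeeping you defer is exactly the missing argument. The standard mechanism (as in the discrete CMC~$1$ construction of Hertrich-Jeromin and the discrete isothermic setup of Bobenko--Pinkall) is the M\"obius invariance of the matrix-valued cross-ratio: the cross-ratio of the four Hermitian matrices $\hat f_i,\hat f_j,\hat f_k,\hat f_\ell$ is conjugate to $cr(g_i,g_j,g_k,g_\ell)=\alpha_{ij}/\alpha_{i\ell}\in\mathbb{R}$, and a real (scalar) cross-ratio is equivalent to concircularity of the four points, which in $\mathbb{R}^{3,1}$ is the asserted coplanarity on the quadric. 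You should replace the hand-waved step by this cross-ratio computation (or by an explicit rank computation on the three difference vectors). Note also that the identity $M_{ij}M_{jk}=M_{i\ell}M_{\ell k}$ you invoke is not free: it is the compatibility condition needed for $E$ to exist at all, and it holds precisely because of the cross-ratio factorization of $g$ together with the symmetric normalization $(1-\lambda\alpha_{i*})^{-1/2}$; it deserves at least a one-line verification or citation rather than being assumed.
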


\begin{corollary}\label{forrefingthisintmpsection}
For any choice of $t$, the parallel surfaces 
\[ \cosh \theta \cdot \hat f + \sinh \theta \cdot 
\hat n \; , \;\;\; \cosh \theta \cdot \hat n + \sinh \theta \cdot 
\hat f \] are concircular for all $\theta \in \mathbb{R}$.  
\end{corollary}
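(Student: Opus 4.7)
The plan is, for each quadrilateral and for both parallel surfaces at once, to show that the four vertices are coplanar in $\mathbb{R}^{3,1}$; combined with their lying on the relevant spaceform quadric, this yields concircularity in the M\"obius sense appropriate to $\mathbb{H}^3$ or $\mathbb{S}^{2,1}$. I would unify the two statements by writing $\hat{H}_i := a\,\hat{f}_i + b\,\hat{n}_i$ for constants $a,b \in \mathbb{R}$, so that the two parallel surfaces correspond to $(a,b)=(\cosh\theta,\sinh\theta)$ and $(a,b)=(\sinh\theta,\cosh\theta)$.

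First I would record that $\langle \hat{f}_i,\hat{f}_i\rangle = -1$, $\langle \hat{n}_i,\hat{n}_i\rangle = 1$, and $\langle \hat{f}_i,\hat{n}_i\rangle = 0$ (all three immediate from the explicit matrix forms of $\hat{f}_i,\hat{n}_i$ together with $\det E_iL_i = 1$ and $\mathrm{diag}(1,-1)$ having determinant $-1$ and trace $0$), so that $\langle \hat{H}_i,\hat{H}_i\rangle = -a^2 + b^2$. This places $\hat{H}_i$ in $\mathbb{H}^3$ in the first case and in $\mathbb{S}^{2,1}$ in the second.

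The key geometric step is to apply the first bullet of Lemma \ref{princurv}, which gives $d\hat{n}_{i*} = -\kappa_{i*}\,d\hat{f}_{i*}$ for every edge $i*$. Consequently
\[ d\hat{H}_{i*} = (a - b\,\kappa_{i*})\,d\hat{f}_{i*}, \]
so every edge of $\hat{H}$ is a scalar multiple in $\mathbb{R}^{3,1}$ of the corresponding edge of $\hat{f}$. By the third bullet of Lemma \ref{princurv}, the three difference vectors $\hat{f}_j - \hat{f}_i$, $\hat{f}_k - \hat{f}_i$, $\hat{f}_\ell - \hat{f}_i$ lie in a common $2$-plane $\Pi \subset \mathbb{R}^{3,1}$, and hence so does $\hat{f}_k - \hat{f}_j$. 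Writing $\hat{H}_k - \hat{H}_i = (\hat{H}_j - \hat{H}_i) + (\hat{H}_k - \hat{H}_j)$ and observing that each summand is parallel to an $\hat{f}$-edge, I would conclude that $\hat{H}_j - \hat{H}_i$, $\hat{H}_k - \hat{H}_i$, $\hat{H}_\ell - \hat{H}_i$ all lie in $\Pi$; hence the four $\hat{H}$-vertices are coplanar in $\mathbb{R}^{3,1}$, which is the definition of concircularity in the relevant spaceform.

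The main obstacle is essentially notational. The geometric content reduces to the elementary observation that once corresponding edges of two quadrilaterals are parallel in $\mathbb{R}^{3,1}$, the two quadrilaterals span the same linear $2$-plane (up to translation), so planarity of one forces planarity of the other; no delicate computation is needed beyond Lemma \ref{princurv}.
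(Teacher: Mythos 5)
Your proposal is correct and follows essentially the same route as the paper's proof: both rest on the edge-parallelism $d\hat n_{i*}\,\|\,d\hat f_{i*}$ from Lemma \ref{princurv} together with the planarity of the quadrilaterals of $\hat f$ (third bullet of that lemma) to conclude that the quadrilaterals of the linear combination lie in parallel planes, hence are concircular in the ambient quadric. Your version merely spells out the linear-algebra step and the verification that the parallel surfaces land in $\mathbb{H}^3$ or $\mathbb{S}^{2,1}$, which the paper leaves implicit.
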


\begin{proof}
$d\hat f_{i*} || d\hat n_{i*}$ and the fact that corresponding quadrilaterals of  $f$ and $n$ lie in parallel planes imply that corresponding quadrilaterals of $\cosh \theta \cdot \hat{f} + \sinh \theta \cdot \hat{n}$ also lie in parallel planes, proving the corollary.  
\end{proof}

Like in Equation \eqref{extra2}, we can lift $\hat f$ and $\hat n$ to 
$f,n \in \mathbb{R}^{4,2}$, producing a discrete Legendre immersion 
$\Lambda=\text{span}\{ f,n \}$.  We define 
\[ \mathcal{A}(f,f)_{ijk\ell} := \frac{1}{2} df_{ik} \wedge df_{j\ell} \; , \]
and we can define real-valued functions $H=H_{ \hat{f} }$ and 
$K=K_{ \hat{f} }$ on faces by 
\begin{equation}\label{defnOfH} 
\mathcal{A}(f,n)_{ijk\ell} := 
\frac{1}{4} \{ df_{ik} \wedge dn_{j\ell} + 
dn_{ik} \wedge df_{j\ell} \} = 
-H \cdot \mathcal{A}(f,f)_{ijk\ell} \; , 
\end{equation}
\begin{equation}\label{defnOfK} 
\mathcal{A}(n,n)_{ijk\ell} = \frac{1}{2} 
dn_{ik} \wedge dn_{j\ell} 
= K \cdot 
\mathcal{A}(f,f)_{ijk\ell} \; . \end{equation}
We have the following definition: 

\begin{defn}[\cite{BHR3}] \label{GaussMeanDefn} 
We call $K_{\hat f}$ and $H_{\hat f}$ 
the (extrinsic) {\em Gaussian} and {\em mean curvature}
of the projection $\hat f$ of $\Lambda$ to the spaceform given by $p,q$.  
\end{defn}

Proven similarly to the corresponding result for $\mathbb{R}^3$ in 
\cite{BPW-MathAnn}, using item 1 of Lemma \ref{princurv}, we have: 

\begin{lemma}\label{GMbyPrinp}
For all choices of spaceforms, we have 
\begin{eqnarray*}
&&H_{\hat f} = \frac{ \kappa_{ij}\kappa_{k\ell}-\kappa_{i\ell}\kappa_{jk}
 }{\kappa_{ij}-\kappa_{i\ell}-\kappa_{jk}+\kappa_{k\ell}} \ , \\  
&&K_{\hat f} = \frac{ \kappa_{ij} \kappa_{jk} \kappa_{k\ell} \kappa_{i\ell} }{\kappa_{ij}-\kappa_{i\ell}-\kappa_{jk}+
\kappa_{k\ell}}  \left( -\frac{1}{\kappa_{ij} } +\frac{1}{\kappa_{jk} }+\frac{1}{\kappa_{i\ell} }-\frac{1}{\kappa_{k\ell} } \right) \ . 
\end{eqnarray*}
\end{lemma}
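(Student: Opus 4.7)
The plan is to mimic the $\mathbb{R}^3$ argument of \cite{BPW-MathAnn}, whose only real geometric input is the edge relation $d\hat n_{i*}=-\kappa_{i*}\,d\hat f_{i*}$ supplied here by the first bullet of Lemma~\ref{princurv}. Combined with the coplanarity of the $\hat f$-quadrilateral (third bullet of the same lemma), this relation also forces the $\hat n$-quadrilateral to be planar and parallel, so the three bivectors $\mathcal{A}(f,f)_{ijk\ell}$, $\mathcal{A}(f,n)_{ijk\ell}$, $\mathcal{A}(n,n)_{ijk\ell}$ all lie in the one-dimensional space $\mathbb{R}\,a\wedge b$, where $a:=d\hat f_{ij}$ and $b:=d\hat f_{i\ell}$. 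Assuming generically that $a,b$ are linearly independent (the result extending by continuity), the lemma is then a purely scalar identity relating the ratios that define $H_{\hat f}$ and $K_{\hat f}$ via \eqref{defnOfH} and \eqref{defnOfK}.

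The second step is to parametrize. Writing $d\hat f_{jk}=\gamma a+\delta b$, the closing relation $a+d\hat f_{jk}=b+d\hat f_{\ell k}$ forces $d\hat f_{\ell k}=(\gamma+1)a+(\delta-1)b$. Applying $d\hat n=-\kappa\,d\hat f$ on each edge, the analogous loop condition on $\hat n$ becomes the pair of scalar equations
\[
\kappa_{ij}+\kappa_{jk}\gamma=\kappa_{k\ell}(\gamma+1),\qquad \kappa_{jk}\delta=\kappa_{i\ell}+\kappa_{k\ell}(\delta-1),
\]
with unique solution $\gamma=(\kappa_{k\ell}-\kappa_{ij})/(\kappa_{jk}-\kappa_{k\ell})$, $\delta=(\kappa_{i\ell}-\kappa_{k\ell})/(\kappa_{jk}-\kappa_{k\ell})$. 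Summing produces $\gamma+\delta+1=(\kappa_{ij}-\kappa_{i\ell}-\kappa_{jk}+\kappa_{k\ell})/(\kappa_{k\ell}-\kappa_{jk})$, which supplies the common denominator appearing in both formulas of the lemma.

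Finally one evaluates $\mathcal{A}(f,f)_{ijk\ell}=\tfrac{1}{2}(\gamma+\delta+1)\,a\wedge b$, and analogous expressions for $\mathcal{A}(f,n)_{ijk\ell}$ and $\mathcal{A}(n,n)_{ijk\ell}$, by direct expansion in the $(a,b)$-basis. For $H_{\hat f}$ the numerator cancels down to $\kappa_{ij}\kappa_{k\ell}-\kappa_{i\ell}\kappa_{jk}$ (times the factor $1/(\kappa_{k\ell}-\kappa_{jk})$ matching the denominator), so the stated formula drops out immediately. For $K_{\hat f}$ the numerator after substitution is a sum of four cubic monomials in the $\kappa$'s, which factors cleanly as $\kappa_{ij}\kappa_{jk}\kappa_{k\ell}\kappa_{i\ell}$ times the harmonic expression $-1/\kappa_{ij}+1/\kappa_{jk}+1/\kappa_{i\ell}-1/\kappa_{k\ell}$, recovering the stated identity. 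The main (and only) obstacle is this algebraic bookkeeping; no feature of the metric signature of the ambient space enters the computation, so the same argument applies uniformly to every spaceform contemplated in the lemma.
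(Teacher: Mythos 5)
Your proposal is correct and follows essentially the same route as the paper: both express the diagonal differences in the basis $\{d\hat f_{ij}, d\hat f_{i\ell}\}$ with coefficients (your $1+\gamma,\delta$ are exactly the paper's $c_1,c_2$) determined by the quadrilateral closing conditions together with $d\hat n_{i*}=-\kappa_{i*}d\hat f_{i*}$, and then evaluate the bivectors $\mathcal{A}(f,f)$, $\mathcal{A}(f,n)$, $\mathcal{A}(n,n)$ in the one-dimensional span of $a\wedge b$. The algebraic simplifications you describe do check out, so the argument is sound.
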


\begin{proof}

The compatibility condition $\hat{n}_{ij}+\hat{n}_{jk}=\hat{n}_{i\ell} +\hat{n}_{\ell k}$ for $\hat{n}$ implies
\begin{eqnarray*}
\kappa_{ij}d \hat{f}_{ij}+\kappa_{jk}d \hat{f}_{jk}=\kappa_{i\ell}d \hat{f}_{i\ell}+\kappa_{k\ell}d \hat{f}_{\ell k} \ .
\end{eqnarray*}
Thus
\begin{eqnarray*}
&&\kappa_{ij}d \hat{f}_{ij}+\kappa_{jk} (d \hat{f}_{ik} -d \hat{f}_{ij} )=\kappa_{i\ell}d \hat{f}_{i\ell}+\kappa_{k\ell}(d \hat{f}_{ik} -d \hat{f}_{i\ell} ) \\
&&\Rightarrow d \hat{f}_{ik} =c_1 d \hat{f}_{ij}+c_2 d \hat{f}_{i\ell} \ ,
\end{eqnarray*}
where 
\begin{equation}\label{c1c2}
c_1=\frac{-\kappa_{ij} +\kappa_{jk} }{ \kappa_{jk}-\kappa_{k\ell} }, \ \ c_2=\frac{-\kappa_{lk} +\kappa_{i\ell} }{ \kappa_{jk}-\kappa_{k\ell} } .
\end{equation}
Similarly, by the compatibility condition for $\hat{f}$ and the condition $d n_{**}=-\kappa_{**} d f_{**}$, we have $d \hat{n}_{ik}=c_3 d f_{ij}+c_4 d f_{il} \ $, 
where
\begin{equation}\label{c3c4}
c_3=\frac{\kappa_{k\ell} (\kappa_{jk} -\kappa_{ij}) }{ \kappa_{k\ell}-\kappa_{jk} } \ , \ \ c_4=\frac{\kappa_{jk} (\kappa_{i\ell} -\kappa_{k\ell}) }{ \kappa_{k\ell}-\kappa_{jk} }  \ .
\end{equation}
Note that $d \hat{f}_{j\ell}=d \hat{f}_{i\ell}-d \hat{f}_{ij}, \ d \hat{n}_{j\ell}=d \hat{n}_{i\ell}-d \hat{n}_{ij}$, and we have
\begin{equation}\label{c1c2c3c4}
H_{\hat{f} }=\frac{\kappa_{i\ell} c_1 +\kappa_{ij} c_2-c_3 -c_4}{2(c_1+c_2)} \ , \ \  K_{\hat{f} }=-\frac{\kappa_{i\ell} c_3+\kappa_{ij} c_4}{c_1+c_2} \ .
\end{equation}
Substituting Equations \eqref{c1c2} and \eqref{c3c4} into Equation \eqref{c1c2c3c4}, we have $H_{\hat{f} }$ and $K_{\hat{f} }$ as in Lemma \ref{GMbyPrinp}.
\end{proof}

We can similarly define the Gaussian and mean curvatures $K_{\hat{n}}$, $H_{\hat{n}}$ of $\hat{n}$, and we see that 
\begin{equation}\label{extra3}
K_{\hat{n}}=\frac{1}{K_{\hat{f}} }, \ \ H_{\hat{n}}=\frac{H_{\hat{f}} }{K_{\hat{f}} }, \ \ \kappa_{ij,\hat{n} }=\frac{1}{\kappa_{ij,\hat{f} }}, \ \ \kappa_{il,\hat{n} }=\frac{1}{\kappa_{il,\hat{f} }}. 
\end{equation}
One can confirm the next lemma via Lemma \ref{GMbyPrinp} and Equations \eqref{lemmaEqn:aboutkappa}, \eqref{extra3}:

\begin{lemma}\label{extra4}
The mean and Gaussian curvatures $H_{\hat f}$ and $K_{\hat f}$ of a discrete  surface $\hat{f}$ with Weierstrass-type representation in $\mathbb{H}^3$ satisfy 
\begin{equation}\label{extra5}
2 t (H_{\hat f}-1)+(1-t) (K_{\hat f}-1)=0 \ , 
\end{equation}
and the mean and Gaussian curvatures $H_{\hat n}$ and $K_{\hat n}$ of a discrete surface $\hat n$ with Weierstrass-type representation in $\mathbb{S}^{2,1}$ satisfy 
\begin{equation}\label{extra7}
2 t (H_{\hat n}-1)-(1+t) (K_{\hat n}-1)=0 \ . 
\end{equation}
\end{lemma}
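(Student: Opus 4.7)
\textbf{Proof plan for Lemma \ref{extra4}.} The strategy is to reduce Equation \eqref{extra5} to a per-face algebraic identity, then obtain Equation \eqref{extra7} as a corollary of \eqref{extra5} via Equation \eqref{extra3}. Fix a quadrilateral and abbreviate $a=\kappa_{ij}$, $b=\kappa_{jk}$, $c=\kappa_{k\ell}$, $d=\kappa_{i\ell}$. From Lemma \ref{GMbyPrinp} a direct rearrangement yields the factored forms
\begin{equation*}
H_{\hat f}-1 = \frac{(a-1)(c-1) - (b-1)(d-1)}{a+c-b-d},
\end{equation*}
\begin{equation*}
K_{\hat f}-1 = \frac{(a-1)(c-1)(b+d) - (b-1)(d-1)(a+c)}{a+c-b-d},
\end{equation*}
the second obtained by writing $K_{\hat f}=(abc+acd-abd-bcd)/(a+c-b-d)$ and subtracting $1$. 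Combining,
\begin{equation*}
2t(H_{\hat f}-1)+(1-t)(K_{\hat f}-1) = \frac{(a-1)(c-1)\bigl[2t+(1-t)(b+d)\bigr] - (b-1)(d-1)\bigl[2t+(1-t)(a+c)\bigr]}{a+c-b-d},
\end{equation*}
so \eqref{extra5} reduces to the per-face identity $(a-1)(c-1)\bigl[2t+(1-t)(b+d)\bigr] = (b-1)(d-1)\bigl[2t+(1-t)(a+c)\bigr]$.

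Next, I use the Weierstrass-type expressions. Rewriting Equation \eqref{lemmaEqn:aboutkappa} as $\kappa_{i*}=(B_{i*}-(1+t)A_{i*})/(B_{i*}+(1-t)A_{i*})$ with $A_{i*}:=|dg_{i*}|^2$ and $B_{i*}:=\mathcal{T}_i\mathcal{T}_*\lambda\alpha_{i*}$, and setting $D_{i*}:=B_{i*}+(1-t)A_{i*}$, a short calculation gives
\begin{equation*}
\kappa_{i*}-1=\frac{-2A_{i*}}{D_{i*}},\qquad t+(1-t)\kappa_{i*}=\frac{B_{i*}-(1-t)A_{i*}}{D_{i*}},
\end{equation*}
from which
\begin{equation*}
\frac{(a-1)(c-1)}{2t+(1-t)(a+c)}=\frac{2A_{ij}A_{k\ell}}{B_{ij}B_{k\ell}-(1-t)^2A_{ij}A_{k\ell}},
\end{equation*}
and the analogous expression for the pair $b,d$ with edges $jk,i\ell$. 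Cross-multiplying, the required identity collapses to
\begin{equation*}
A_{ij}A_{k\ell}\,B_{jk}B_{i\ell}\;=\;A_{jk}A_{i\ell}\,B_{ij}B_{k\ell}.
\end{equation*}

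By the labelling conventions $\alpha_{ij}=\alpha_{k\ell}$ and $\alpha_{jk}=\alpha_{i\ell}$, so both $B_{ij}B_{k\ell}$ and $B_{jk}B_{i\ell}$ contain the common prefactor $\mathcal{T}_i\mathcal{T}_j\mathcal{T}_k\mathcal{T}_\ell\,\lambda^2$; cancelling it, the identity reduces to
\begin{equation*}
|dg_{ij}|^2\,|dg_{k\ell}|^2\,\alpha_{i\ell}^{\,2}\;=\;|dg_{jk}|^2\,|dg_{i\ell}|^2\,\alpha_{ij}^{\,2},
\end{equation*}
which is exactly the modulus squared of the (real) cross-ratio condition $(g_i-g_j)(g_k-g_\ell)/((g_j-g_k)(g_\ell-g_i))=\alpha_{ij}/\alpha_{i\ell}$ defining the discrete holomorphic function $g$. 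This proves \eqref{extra5}.

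Finally, for \eqref{extra7} I use Equation \eqref{extra3}, namely $H_{\hat n}=H_{\hat f}/K_{\hat f}$ and $K_{\hat n}=1/K_{\hat f}$, to compute
\begin{equation*}
2t(H_{\hat n}-1)-(1+t)(K_{\hat n}-1)=\frac{1}{K_{\hat f}}\bigl[2t(H_{\hat f}-1)+(1-t)(K_{\hat f}-1)\bigr]=0,
\end{equation*}
by \eqref{extra5}. The main difficulty is purely bookkeeping: the factored rewrite of $K_{\hat f}-1$, and tracking signs when substituting the Weierstrass data for $\kappa_{i*}$. The conceptual point is that the linear Weingarten relation holds precisely because the cross-ratio factorization of $g$ is exactly what matches the numerators $A_{ij}A_{k\ell}$ and $A_{jk}A_{i\ell}$ (weighted by the $\alpha$'s) on the two sides.
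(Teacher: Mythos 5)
Your proposal is correct and is exactly the computation the paper has in mind: the paper gives no written proof, saying only that the lemma can be confirmed via Lemma \ref{GMbyPrinp} and Equations \eqref{lemmaEqn:aboutkappa}, \eqref{extra3}, and your argument carries out precisely that verification (the factored forms of $H_{\hat f}-1$ and $K_{\hat f}-1$, reduction to the cross-ratio identity, and deduction of \eqref{extra7} from \eqref{extra5} via \eqref{extra3}). All the intermediate identities check out.
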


\begin{figure}
\begin{center}
\includegraphics[width=150mm]{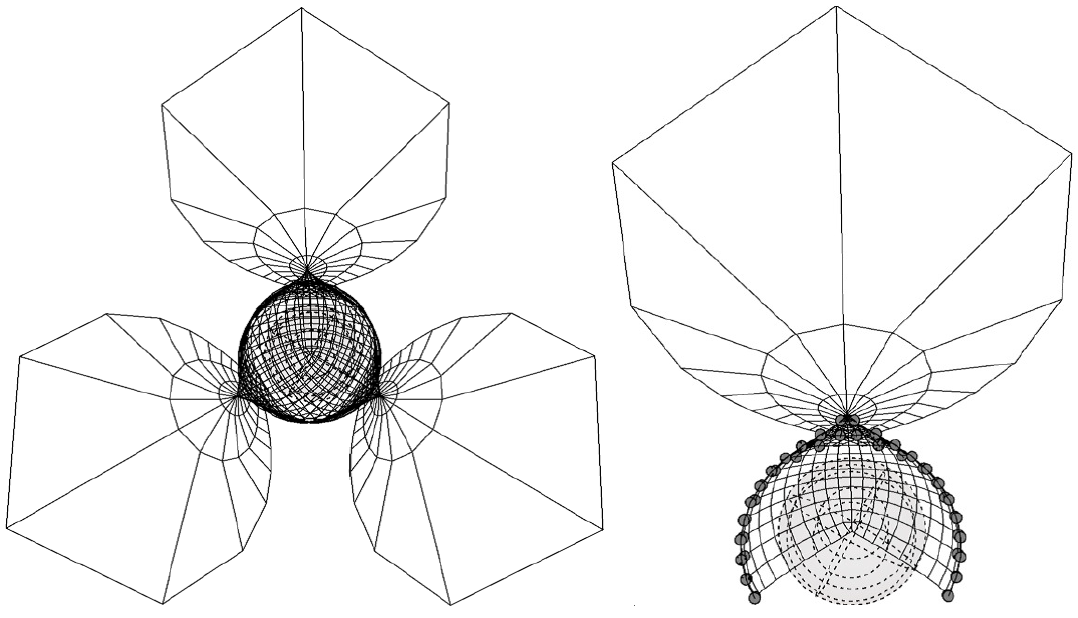}
\end{center}
\caption{One example of a discrete BrLW surface in $\mathbb{H}^3$ with $t=-2$. Red vertices are the FPS vertices of the surface. In order to draw the surface, we use stereographic projection from the south pole $(0,0,0,-1)$. When $ 1+t |g_i|^2 >0 $, the surface is projected to the inside of the unit ball $\mathbb{B}^3$, whose boundary is drawn in gray above. When $ 1+t |g_i|^2 <0 $, it is projected to the outside of $\mathbb{B}^3$. One-third of the surface is shown on the right.}
\end{figure}

Thus we know that the discrete surfaces with Weierstrass-type representations defined here are included amongst the discrete BrLW and BiLW  surfaces defined in \cite{BHR3}, by the following Proposition \ref{ConsQuanBrBi} from \cite{BHR3}. This proposition also includes discrete minimal  surfaces in $\mathbb{R}^3$ and their parallel surfaces in $\mathbb{R}^3$, as well as parallel surfaces of discrete maximal surfaces in $\mathbb{R}^{2,1}$.

\begin{proposition}[\cite{BHR3}] \label{ConsQuanBrBi}
All discrete BrLW and BiLW surfaces in $\mathbb{H}^3$ and $\mathbb{S}^{2,1}$, and all parallel surfaces of discrete minimal surfaces in $\mathbb{R}^3$ and discrete maximal surfaces in $\mathbb{R}^{2,1}$, are projections of discrete $\Omega$ surfaces with constant conserved quantities,  at least one of which is lightlike.  Conversely, for any discrete $\Omega$ surface with constant 
conserved quantities\footnote{Again we assume $q_+$, $q_-$ are not parallel, and that $\text{span} \{ q_{\pm} \}$ is not a null plane.} $q_\pm$, at least one of which is lightlike, its projections $\hat f$ and $\hat n$ given by choosing $p,q \in \mathrm{span} \{ q_\pm \}$ are discrete BrLW and BiLW surfaces, respectively, or $\hat{f}$ is either a parallel surface of a discrete minimal surface in $\mathbb{R}^3$ or maximal surface in $\mathbb{R}^{2,1}$.
\end{proposition}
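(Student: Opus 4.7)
The plan is to imitate, in the discrete Lie sphere framework of Section~2, the argument already laid out above for the smooth case (Lemma~\ref{BRandBiLemma}), and then to account for the extra parallel-surface cases that arise in the discrete minimal and maximal settings. For the forward direction, start with a discrete BrLW surface $\hat f \in \mathbb{H}^3$ with BiLW normal $\hat n \in \mathbb{S}^{2,1}$, lift to $f, n \in \mathbb{R}^{4,2}$ as in \eqref{extra2} with $p = (0,0,0,0,0,1)^t$ and $q = (0,0,0,0,-1,0)^t$, and define the candidate isothermic sphere congruences
\[
s_+ = f + n, \qquad s_- = \tfrac{t+1}{t-1}\, f + n,
\]
together with candidate constant conserved quantities $q_+ = (0,0,0,0,1,1)^t$ and $q_- = (0,0,0,0,t-1,t+1)^t$. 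A direct computation in the metric of $\mathbb{R}^{4,2}$ gives $(s_\pm, q_\pm) = 0$ at every vertex, and $q_+$ is manifestly lightlike.

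The substantive step is verifying that $s_\pm$ are discrete isothermic sphere congruences for $\Lambda = \mathrm{span}\{f,n\}$, so that the pair $(s_\pm, q_\pm)$ really makes $\Lambda$ into a discrete $\Omega$ surface with constant conserved quantities in the sense of \cite{BHR3,wisky-next}. Since $s_\pm = b_\pm f + n$ with $b_\pm$ constant, the discrete analogs of the smooth identities (4.5), (4.10), (4.11) from \cite{wisky-next} collapse to a single per-edge relation between the cross-ratio factorizing functions $\alpha_{ij}, \alpha_{i\ell}$ of $\hat f$ and the principal curvatures $\kappa_{ij}, \kappa_{i\ell}$. Substituting the explicit expression \eqref{lemmaEqn:aboutkappa} for $\kappa_{i*}$ from Lemma~\ref{princurv}, and using the discrete Weingarten identity \eqref{extra5} of Lemma~\ref{extra4}, I would check this relation by direct algebraic manipulation in $g, \bar g, \lambda, \alpha, t$. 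Equivalently, constancy of $q_\pm$ translates into $\Gamma^\pm q_\pm = 0$ for the discrete associated family of flat connections of $s_\pm$, and this flatness is exactly the linear Weingarten relation rewritten.

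For the converse, suppose $\Lambda$ is a discrete $\Omega$ surface with constant conserved quantities $q_\pm$, at least one lightlike, with $q_+, q_-$ neither parallel nor spanning a null plane. Choose $p, q \in \mathrm{span}\{q_\pm\}$ and form the projections $\hat f \in M^3_{p,q}$ and $\hat n \in M^3_{q,p}$. Expanding $q_\pm$ in the basis $\{p, q\}$ and using $(s_\pm, q_\pm) = 0$ together with \eqref{extra1} forces $s_\pm = b_\pm f + n$ with constants $b_\pm$ determined by the change of basis. Feeding the formulas of Lemma~\ref{GMbyPrinp} back through \eqref{kappadefn} then yields a linear relation of the form $A(H_{\hat f}-1) + B(K_{\hat f}-1) = 0$, whose coefficients $A, B$ match \eqref{extra5} and \eqref{extra7}, so that $\hat f$ is BrLW and $\hat n$ is BiLW. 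The parallel-surface case appears precisely when one of the $b_\pm$ is infinite, i.e.\ when the corresponding $s_\pm$ reduces to $f$ itself; specializing to $p$ or $q$ lightlike (so that $M^3_{p,q}$ degenerates to $\mathbb{R}^3$ or $\mathbb{R}^{2,1}$) then identifies $\hat f$ as a parallel surface of a discrete minimal or maximal surface.

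The main obstacle is the isothermic-sphere-congruence verification in the second paragraph: the discrete notion involves not only the ``factorizing cross ratio'' at the level of $\hat f$ (already built into the Weierstrass data $g, \alpha$) but also a compatible factorization for the sphere congruence $s_\pm$, and checking that the constancy of $b_\pm$ forces both factorizations to coexist on every quadrilateral is the nontrivial computation. Once that is in place, all remaining steps are bookkeeping: the constant $q_\pm$ with $q_+$ null is automatic from the construction, and the equivalence with Definition~\ref{LegLiftDefn} uses only the principal-net property, which is part of Lemma~\ref{princurv}.
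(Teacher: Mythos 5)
First, a point of orientation: the paper does not prove Proposition \ref{ConsQuanBrBi} at all --- it is imported verbatim from \cite{BHR3}, and the only argument in the text is the smooth sketch preceding Lemma \ref{BRandBiLemma}, which is what you are imitating. Judged against what the statement actually asserts, your proposal has a genuine scope gap in the forward direction. You establish it only for surfaces that already come equipped with the Weierstrass data $(g,\alpha,\lambda,t)$ of \S\ref{secfive}: your verification of the isothermic-sphere-congruence condition substitutes the explicit formula \eqref{lemmaEqn:aboutkappa} and invokes Lemma \ref{extra4}, both of which are statements \emph{about} the Weierstrass-type representation, not about an arbitrary discrete BrLW surface. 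In \cite{BHR3} a discrete BrLW surface is defined intrinsically, via the face-based curvatures of Definition \ref{GaussMeanDefn} and the relation \eqref{extra5}; the claim that every such surface admits Weierstrass data is itself a theorem needing separate proof, so your argument is at best a proof for the subclass of \S\ref{secfive}. Moreover, the forward direction for parallel surfaces of discrete minimal and maximal surfaces is not addressed: those live in flat ambient geometries where one of $p,q$ is null, so the specific choice $p=(0,0,0,0,0,1)^t$, $q=(0,0,0,0,-1,0)^t$ you carry over from \S\ref{BrBi} does not cover them.

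Second, the step you yourself flag as ``the main obstacle'' is in fact the entire content of the proposition, and your plan for it --- ``direct algebraic manipulation in $g,\bar g,\lambda,\alpha,t$'' --- again leans on the Weierstrass data rather than on the structure that makes the general statement true. The mechanism in \cite{BHR3} is a mixed-area characterization: the linear Weingarten relation is equivalent to the vanishing of a mixed area of two enveloping sphere congruences $s_\pm=b_\pm f+n$, and that vanishing is what makes $s_\pm$ isothermic with constant conserved quantities (equivalently $\Gamma^\pm q_\pm=0$). Your converse gestures at the right objects (expanding $q_\pm$ in $\{p,q\}$ correctly forces constant $b_\pm$, and your checks $(s_\pm,q_\pm)=0$ are right), but it never explains why the $\Omega$ structure with constant $b_\pm$ yields a linear relation in $(H_{\hat f},K_{\hat f})$ with precisely the coefficients of \eqref{extra5} and \eqref{extra7}; in particular, the hypothesis that at least one $q_\pm$ is lightlike is what pins down the Bryant/Bianchi normalization (note $(q_-,q_-)=-4t$, so $q_-$ is generally non-null), and that hypothesis is never used in your derivation. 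The heuristic ``the parallel-surface case appears when $b_\pm$ is infinite'' points in a reasonable direction but would need to be made precise via the degeneration of the spaceform $M^3_{p,q}$ as $(p,p)\to 0$, which is not done.
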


In the smooth case, as mentioned in \cite{KU}, parallel surfaces of BrLW surfaces in $\mathbb{H}^3$ are also BrLW surfaces, and BrLW surfaces are classified into the following three types:
\begin{enumerate}
\item flat surfaces (BrLW surfaces with $t=0$),
\item linear Weingarten surfaces of hyperbolic type (BrLW surfaces with $t>0$),
\item linear Weingarten surfaces of de Sitter type (BrLW surfaces with $t<0$).
\end{enumerate}
Parallel surfaces of each type belong to the same type. Thus parallel surfaces of a flat front are also flat. Likewise, parallel surfaces of the other two types again belong to the same types.

Here we see that the same result as in the smooth case holds also in the discrete case. Let $\hat{f}$ be a discrete BrLW surface in $\mathbb{H}^3$.  From \cite{OY}, we have that the Gaussian and mean curvatures $K^{\theta}_{\hat{f} }$, $H^{\theta}_{\hat{f} }$ of the parallel surface $\hat{f}^{\theta }$ at oriented distance $\theta$ are
\begin{eqnarray*}
&&K^{\theta}_{\hat{f} }=\frac{K_{\hat{f} } \cosh^2 \theta -H_{\hat{f} } \sinh 2 \theta +\sinh ^2 \theta }{\cosh^2 \theta -H_{\hat{f} } \sinh 2 \theta +K_{\hat{f} } \sinh ^2 \theta } \ , \\ 
&&H^{\theta}_{\hat{f} } = -\frac{(K_{\hat{f} }+1)\sinh (2\theta)-2H_{\hat{f} } \cosh (2\theta)}{2\{ \cosh^2 \theta -H_{\hat{f} } \sinh (2\theta) +K_{\hat{f} } \sinh^2 \theta \} } \ .
\end{eqnarray*}
Observing that $\hat{f}=(\hat{f}^{\theta })^{-\theta }$, we have
\begin{equation}\label{extra6}
\left\{
\begin{split}
K_{\hat{f} }=\frac{K^{\theta }_{\hat{f} } \cosh^2 \theta +H^{\theta }_{\hat{f} } \sinh 2 \theta +\sinh ^2 \theta }{\cosh^2 \theta +H^{\theta }_{\hat{f} } \sinh 2 \theta +K^{\theta }_{\hat{f} } \sinh ^2 \theta } \ , \\ 
H_{\hat{f} } = \frac{(K^{\theta }_{\hat{f} }+1)\sinh (2\theta)+2H^{\theta }_{\hat{f} } \cosh (2\theta)}{2\{ \cosh^2 \theta +H^{\theta }_{\hat{f} } \sinh (2\theta) +K^{\theta }_{\hat{f} } \sinh^2 \theta \} } \ .
\end{split}
\right.
\end{equation}
Substituting Equation \eqref{extra6} into Equation \eqref{extra5}, we have
\[
2T(H^{\theta }_{\hat{f} }-1)+(1-T)(K^{\theta }_{\hat{f} }-1)=0 ,
\]
where $T=\mathrm{e}^{-2\theta }t$. Thus discrete BrLW surfaces in $\mathbb{H}^3$ are classified into the three types $(1) - (3)$ mentioned above.  Similarly, discrete BiLW surfaces in $\mathbb{S}^{2,1}$ are classified into three types.

\section{Singular vertices on discrete nonzero CGC surfaces in $M^3$}\label{SingVertCGC}

When a smooth surface has CGC $K=\kappa_1 \kappa_2 \neq 0$, then when one of the $\kappa_\alpha$ passes through zero, the other passes through 
infinity, and we can always call this a singular point.  This is precisely what allowed for the description of singularities of discrete flat (i.e. $K \equiv 1$) surfaces in $\mathbb{H}^3$ as given in \cite{HRSY}. Here we develop that into a definition without reliance on a Weierstrass type representation, extending it to all discrete surfaces in any $M^3$ with nonzero constant Gaussian curvature.

\begin{defn}\label{defn:CGCcase}
Consider $\Lambda$ as in Definition \ref{LegLiftDefn}, together with a choice of spaceform determined by choosing $p$ and $q$, that has projection $\hat f$ with nonzero constant discrete Gaussian curvature $K_{\hat f}$.  We say that $(m,n)$ is a {\em singular vertex} of $\hat f$ if 
\[ 
\kappa_{(m-1,n),(m,n)} \cdot \kappa_{(m,n),(m+1,n)} \leq 0  \;\;\; 
\text{or} \;\;\;  
\kappa_{(m,n-1),(m,n)} \cdot \kappa_{(m,n),(m,n+1)} \leq 0 \; . 
\]
\end{defn}

For a $K \equiv 1$ surface with Weierstrass-type representation in $\mathbb{H}^3$, it was shown in \cite{HRSY} that, without loss of generality, $ | \kappa _{(m,n), (m,n+1)} |  >1$ and $ | \kappa _{(m,n), (m+1,n)} |  <1$ for all $m$ and  $n$, which we note in the following theorem:

\begin{theorem}\label{SingVertFlat}
In the case of a $K \equiv 1$ surface in $\mathbb{H}^3$ with Weierstrass-type representation so that the horizontal edges have principal curvatures with absolute value greater than $1$, the first inequality in Definition  \ref{defn:CGCcase} is equivalent to the definition of singular 
vertices for discrete flat surfaces in $\mathbb{H}^3$ as given 
in \cite{HRSY}.
\end{theorem}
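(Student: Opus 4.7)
The plan is to use Lemma~\ref{princurv} with $t=0$ to express the horizontal principal curvature explicitly in terms of the Weierstrass data. Substituting $t=0$ into Equation~\eqref{lemmaEqn:aboutkappa} yields
\[
\kappa_{ij} \;=\; \frac{\lambda\alpha_{ij} - |dg_{ij}|^2}{\lambda\alpha_{ij} + |dg_{ij}|^2}
\]
on horizontal edges. A short case analysis on the signs of numerator and denominator shows that the normalization hypothesis $|\kappa_{ij}|>1$ on all horizontal edges is equivalent to $\lambda\alpha_{ij}<0$ there, and this in turn forces the numerator $\lambda\alpha_{ij}-|dg_{ij}|^2$ to be strictly negative. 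Hence the sign of $\kappa_{ij}$ on horizontal edges is governed entirely by the sign of the denominator $D_{ij}:=|dg_{ij}|^2+\lambda\alpha_{ij}$, and the first inequality in Definition~\ref{defn:CGCcase} at $(m,n)$ reduces to the single algebraic condition
\[
D_{(m-1,n),(m,n)}\cdot D_{(m,n),(m+1,n)} \;\leq\; 0.
\]

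Next, I would translate the singular-vertex condition of \cite{HRSY} into the same algebraic language. In \cite{HRSY} a vertex is declared singular when the discrete caustic---a parallel surface at a specific oriented hyperbolic distance determined from the Weierstrass data---contacts $\hat f$ at that vertex. Using the explicit Weierstrass-type formula for the caustic in \cite{HRSY} and the fact that consecutive horizontal edges through $(m,n)$ share the factor $\lambda\alpha$ up to sign, the contact condition reduces to the vanishing or sign change of exactly the same edge quantity $D_{ij}$, up to a nowhere vanishing factor. Matching this with the conclusion of the previous paragraph then identifies the two definitions.

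The main obstacle will be bridging the two formulations: the \cite{HRSY} condition is geometric (caustic meets surface at a vertex) while Definition~\ref{defn:CGCcase} is analytic (principal curvature sign change across a vertex). The substantive work consists in showing that the scalar edge function controlling caustic contact in \cite{HRSY} agrees, up to a positive factor, with the denominator $D_{ij}$ above. Once that identification is carried out by direct comparison of the \cite{HRSY} caustic formula with the $t=0$ specialization of Equation~\eqref{lemmaEqn:aboutkappa}, the equivalence is immediate, since the numerator of $\kappa_{ij}$ is sign-definite under the stated normalization.
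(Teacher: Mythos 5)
Your proposal follows essentially the same route as the paper: specialize Equation \eqref{lemmaEqn:aboutkappa} to $t=0$ and identify the resulting sign condition on consecutive horizontal edges with the condition used in \cite{HRSY}; your added observation that $|\kappa_{ij}|>1$ forces $\lambda\alpha_{ij}<0$, so the numerator is sign-definite and only the denominator $D_{ij}=|dg_{ij}|^2+\lambda\alpha_{ij}$ governs the sign of $\kappa_{ij}$, is a correct refinement that the paper leaves implicit. Like the paper's own proof, you defer the final identification with the caustic-contact formulation to the computation in \cite{HRSY}, so there is no gap beyond what the published argument itself assumes.
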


\begin{proof}
By Lemma \ref{princurv}, for $t=0$ we have 
\[ \kappa_{i*} = \frac{-|dg_{i*}|^2 + \lambda \alpha_{i*}}{|dg_{i*}|^2 + 
\lambda \alpha_{i*}} \: . \]  
Let $p_-$, $p$ and $p_+$ be three consecutive vertices in one 
direction in the lattice domain. We can define singularities on discrete flat (i.e. $K \equiv 1$) surfaces in $\mathbb{H}^3$, now without referring to caustics 
as in \cite{HRSY}, by 
simply using the condition 
\[
\frac{-|dg_{p_-p}|^2 + \lambda \alpha_{p_-p}}{|dg_{p_-p}|^2 + 
\lambda \alpha_{p_-p}} \cdot 
\frac{-|dg_{pp_+}|^2 + \lambda \alpha_{pp_+}}{|dg_{pp_+}|^2 + 
\lambda \alpha_{pp_+}} < 0 , 
\]
as understood in \cite{HRSY}.  
\end{proof}

However, our definition allows the second inequality in Definition \ref{defn:CGCcase}, which allows us to include more singular vertices (see Figure \ref{flatinH3}).

\begin{figure}
\begin{center}
\includegraphics[height=85mm]{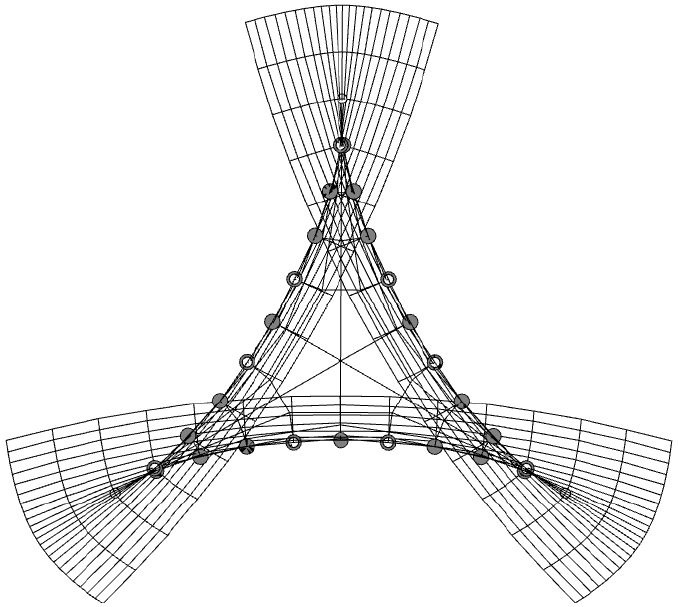}
\end{center}
\caption{A flat surface in $\mathbb{H}^3$ with its singular vertices in the sense of \cite{HRSY} shown with solid gray dots, and the extra singular vertices that would be included by Definition \ref{defn:CGCcase} shown with hollowed-out dots. }
\label{flatinH3}
\end{figure}

\section{Discrete minimal surfaces and their parallel surfaces}\label{secseven}

\subsection{Smooth minimal surfaces in $\mathbb{R}^3$} 
We can always take a smooth constant mean curvature 
(CMC) surface in a $3$-dimensional Riemannian spaceform 
to have local isothermic coordinates 
$z=u+iv$ on $M^2$, $u,v \in \mathbb{R}$ (away from umbilic points), 
and then the Hopf differential becomes $r dz^2$ for some real constant 
$r$.  Rescaling the coordinate $z$ by a constant real factor, 
we may assume $r=1$.  
So we now assume we have an isothermic minimal surface 
in $\mathbb{R}^3$ with Hopf differential $Q=dz^2$. Then 
\begin{equation}\label{eqn:HDthing} \frac{Q}{dg} = 
\frac{dz}{g^\prime} , 
\end{equation} 
where $g$ is the stereographic projection of the Gauss map to the complex plane, and $g^\prime = dg/dz$.  The map $g$ taking $z$ in the domain of the immersion 
(of the surface) to $\mathbb{C}$ is holomorphic.  
We avoid umbilics, so we have $g^\prime \neq 0$.  
We are concerned with only local behavior,  
so we can ignore the possibility that $g$ has poles.  
Then the Weierstrass representation is
\begin{equation}\label{minsurfeqn} 
\hat{f} = \mbox{Re} \int_{z_0}^z ( 1-g^2,i+ 
i g^2, 2g)^{t} \frac{dz}{g^\prime} \; , 
\end{equation}
with the last factor coming from \eqref{eqn:HDthing}.  
The metric of $\hat{f}$ is
\begin{equation}\label{sdm-eq3-mincase}
\frac{(1+|g|^2)^2}{|g^{\prime}|^2}dzd\bar{z} \; . 
\end{equation}
By direct computation, we have: 

\begin{lemma}\label{lem:7pt1} For a smooth minimal surface $\hat{f}$ as given in Equation \eqref{minsurfeqn}, the partial derivatives of $\hat{f}$ are 
\begin{eqnarray*} 
\hat{f}_u = {\rm Re} \left( \frac{1-g^2}{g_u}, \frac{i(1+g^2)}{g_u}, \frac{2 g}{g_u}  
\right)^{t} , \\
\hat{f}_v = -{\rm Re} \left( \frac{1-g^2}{g_v}, \frac{i(1+g^2)}{g_v}, \frac{2 g}{g_v} 
\right)^{t} . 
\end{eqnarray*}
Furthermore, the principal curvatures of the surface are 
\begin{equation}\label{eqn:kappaoneandtwo} 
\pm \kappa_1 = \mp \kappa_2 = \frac{2 |g^\prime|^2}{(1+|g|^2)^2} \; . 
\end{equation}
\end{lemma}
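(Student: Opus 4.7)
The plan is to treat the two assertions separately, each being a direct computation from the Weierstrass representation \eqref{minsurfeqn}. For the partial derivatives, I would set
\[
\phi(z) := \frac{1}{g'(z)}\bigl(1-g^2,\; i(1+g^2),\; 2g\bigr)^{t},
\]
so that $\hat f = \mathrm{Re}\int_{z_0}^z \phi(w)\,dw$. Since $\phi$ is holomorphic in $z$ (recall $g$ is holomorphic and $g'\neq 0$), the fundamental theorem of calculus yields $\hat f_z = \tfrac12\phi$ and $\hat f_{\bar z} = \tfrac12\overline{\phi}$; the Wirtinger relations $\partial_u = \partial_z + \partial_{\bar z}$ and $\partial_v = i(\partial_z - \partial_{\bar z})$ then produce
\[
\hat f_u = \mathrm{Re}\,\phi, \qquad \hat f_v = -\mathrm{Im}\,\phi.
\]
From holomorphicity of $g$ one has $g_u = g'$ and $g_v = ig'$, so the formula for $\hat f_u$ is immediately the first claimed identity, while the elementary fact $-\mathrm{Im}(w) = -\mathrm{Re}(-iw)$ combined with $-i/g' = 1/g_v$ converts the formula for $\hat f_v$ into the second.

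For the principal curvatures, my plan is to exploit that $(u,v)$ are isothermic coordinates with Hopf differential $Q\,dz^2 = dz^2$. A short componentwise computation yields the algebraic identity $\phi\cdot\phi = 0$ (the Weierstrass conformality condition), which forces $|\hat f_u|^2 = |\hat f_v|^2 = \tfrac12|\phi|^2$ and $\hat f_u \cdot \hat f_v = 0$; substituting $|\phi|^2 = 2(1+|g|^2)^2/|g'|^2$ recovers the conformal factor $\rho^2 = (1+|g|^2)^2/|g'|^2$ of \eqref{sdm-eq3-mincase}. Because $H \equiv 0$ and $Q = 1$, the second fundamental form in these isothermic coordinates reduces to
\[
II = 2\,\mathrm{Re}(Q\,dz^2) = 2(du^2 - dv^2),
\]
so the shape operator $I^{-1}II$ is diagonal in the $(\partial_u,\partial_v)$ basis with entries $\pm 2/\rho^2$; this gives the claimed principal curvatures $\pm 2|g'|^2/(1+|g|^2)^2$.

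Neither part should present a genuine obstacle, as the argument is really just careful bookkeeping on top of the Weierstrass representation. The only slightly delicate step is the sign handling when rewriting $\hat f_v = -\mathrm{Im}\,\phi$ using $g_v$ in place of $g'$, which I would verify by expanding a single scalar component into real and imaginary parts before asserting the identity componentwise for the full vector.
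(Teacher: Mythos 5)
The paper offers no proof of this lemma beyond the phrase ``by direct computation,'' and your argument is precisely that computation, carried out correctly: the Wirtinger-derivative identities $\hat f_u=\mathrm{Re}\,\phi$, $\hat f_v=-\mathrm{Im}\,\phi$ together with $g_u=g'$, $g_v=ig'$ give the stated partial derivatives, and the conformality identity $\phi\cdot\phi=0$ with $|\phi|^2=2(1+|g|^2)^2/|g'|^2$ plus the normalization $Q\,dz^2=dz^2$ built into \eqref{eqn:HDthing} yields the principal curvatures $\pm 2|g'|^2/(1+|g|^2)^2$. Your proof is correct and matches the paper's (implicit) approach.
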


The next lemma will be used as motivation for the discussion about discrete minimal surfaces that follows it:

\begin{lemma}\label{smoothMSparsurfs}
Any parallel surface of a minimal surface in $\mathbb{R}^3$ without umbilics will have constant harmonic mean curvature, and will 
have neither parabolic nor flat points.  
\end{lemma}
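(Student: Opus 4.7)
The plan is to exploit the explicit formula for the principal curvatures $\pm\kappa := \pm 2|g'|^2/(1+|g|^2)^2$ of the minimal surface $\hat f$ given by Lemma \ref{lem:7pt1}, together with the classical formula for how principal curvatures transform under the parallel surface construction in $\mathbb{R}^3$. Specifically, if $\hat{f}^\theta = \hat{f} + \theta\, \hat{n}$ is the parallel surface at oriented distance $\theta$, then at corresponding points the principal curvatures of $\hat{f}^\theta$ are
\[
\kappa_1^\theta = \frac{\kappa_1}{1-\theta\kappa_1} = \frac{\kappa}{1-\theta\kappa}, \qquad
\kappa_2^\theta = \frac{\kappa_2}{1-\theta\kappa_2} = \frac{-\kappa}{1+\theta\kappa}.
\]
The no-umbilic assumption gives $g' \neq 0$, hence $\kappa \neq 0$ everywhere on $\hat f$.

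First I would verify the constancy of the harmonic mean curvature by a direct algebraic computation. Adding the reciprocals gives
\[
\frac{1}{\kappa_1^\theta} + \frac{1}{\kappa_2^\theta}
= \frac{1-\theta\kappa}{\kappa} - \frac{1+\theta\kappa}{\kappa} = -2\theta,
\]
which is independent of the point on the surface, so the harmonic mean curvature
\[
\frac{2\kappa_1^\theta \kappa_2^\theta}{\kappa_1^\theta + \kappa_2^\theta} = \frac{2}{1/\kappa_1^\theta + 1/\kappa_2^\theta} = -\frac{1}{\theta}
\]
is constant on $\hat{f}^\theta$ (for $\theta \neq 0$; the case $\theta = 0$ is the trivial minimal case).

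Second I would observe that a parabolic or flat point on $\hat{f}^\theta$ requires $\kappa_i^\theta = 0$ for some $i$, which from the displayed formulas is equivalent to $\kappa_i = 0$, i.e.\ to $\kappa = 0$, which is excluded by the no-umbilic hypothesis. Hence $\hat{f}^\theta$ has neither parabolic nor flat points. There is no real obstacle here; the computation is essentially immediate once one uses that minimality forces $\kappa_1 = -\kappa_2$, so the only thing to be careful about is signs in the transformation formula and noting that the hypothesis $g' \neq 0$ rules out the vanishing of the numerator of $\kappa_i^\theta$.
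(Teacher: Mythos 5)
Your proof is correct and follows essentially the same route as the paper: both rest on the standard transformation of curvatures under the parallel-surface map in $\mathbb{R}^3$ together with the fact that minimality plus the no-umbilic hypothesis forces $\kappa_1=-\kappa_2\neq 0$. The only (cosmetic) difference is that you phrase the computation in terms of the principal curvatures $\kappa_i^\theta$ and their reciprocals, whereas the paper works directly with the transformed $H$ and $K$ and observes $\hat H/\hat K=-\rho$; these are the same calculation.
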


\begin{proof}
For surfaces in $\mathbb{R}^{3}$ with Gaussian and mean curvatures $K$ and $H$, the parallel 
surfaces at distance $\rho$ have Gaussian and mean curvatures 
\[
\hat K = 
\frac{K}{1-2 \rho H + \rho^2 K} \; , \;\;\; 
\hat H = \frac{H-\rho K}{1-2 \rho H + \rho^2 K} \; , 
\]
so when we have a minimal surface ($H= 0$) in $\mathbb{R}^3$, 
\[
\hat K = \frac{K}{1 + \rho^2 K} \; , \;\;\; \hat H = \frac{-\rho K}{1 + 
\rho^2 K} \; . 
\]
If the minimal surface has no umbilics, then $K \neq 0$, which implies 
no parallel surface can have any parabolic or flat points.

The parallel surfaces all have constant harmonic mean curvature since 
$\frac{\hat H}{\hat K} = - \rho$. 
\end{proof}

\subsection{Discrete minimal surfaces in $\mathbb{R}^3$} 
Analogously to the smooth case, a suitable representation (or definition, see \cite{BobPink2}, \cite{Udo1}) for discrete minimal surfaces (equivalently, defined as discrete surfaces with $H_{\hat{f} } \equiv 0$) is, with $*=j,\ell$, 
\begin{equation}\label{dismin}
\hat{f}_* -\hat{f}_i = \frac{\alpha_{i*} }{2}{\rm Re} \left(
\frac{1-g_*g_i}{g_*-g_i}, \frac{ \sqrt{-1} (1+g_*g_i)}{g_*-g_i}, \frac{g_*+g_i}{g_*-g_i}  \right)^{t} , 
\end{equation}
where the map $g$ from a domain in $\mathbb{Z}^2$ to 
$\mathbb{C}$ is a discrete holomorphic function with cross ratio 
factorizing function $\alpha_{i*}$.  
As in the smooth case where we avoided umbilics, likewise here we assume 
\[ g_*-g_i \neq 0 \; . \]  

\begin{example}
The discrete holomorphic function $g=c (m+n\sqrt{-1})$ for $c$ a complex constant 
will produce a discrete minimal Enneper surface.  
The discrete holomorphic function $g= e^{c_1 m+ c_2 n \sqrt{-1} }$ for 
choices of real constants $c_1$ and $c_2$ so that the cross ratio is 
identically $-1$ will produce a discrete minimal catenoid. (See \cite{BobPink2} for graphics.)
\end{example}

Furthermore, the principal curvatures $\kappa_{i*}$ defined on edges 
(similarly to \eqref{eqn:kappaoneandtwo}) are 
\[
\kappa_{i*} = - \frac{4 |dg_{i*}|^2}{\alpha_{i*} (1+|g_i|^2)(1+|g_*|^2)} \; . 
\]

Based on Lemma \ref{smoothMSparsurfs}, we can justify the following 
definition: 

\begin{defn}\label{defn:minsurfparallelcase}
For any discrete minimal surface, we say 
that $(m,n)$ is a {\em singular vertex} of any given parallel surface if the principal curvatures $\kappa_{**}$ of that parallel surface satisfy
\begin{eqnarray*} 
\kappa_{(m-1,n),(m,n)} \cdot \kappa_{(m,n),(m+1,n)} \leq 0  \;\;\; 
\text{or} \\ 
\kappa_{(m,n-1),(m,n)} \cdot \kappa_{(m,n),(m,n+1)} \leq 0 \ .
\end{eqnarray*}
\end{defn}

\begin{figure}
\begin{center}
\includegraphics[width=150mm]{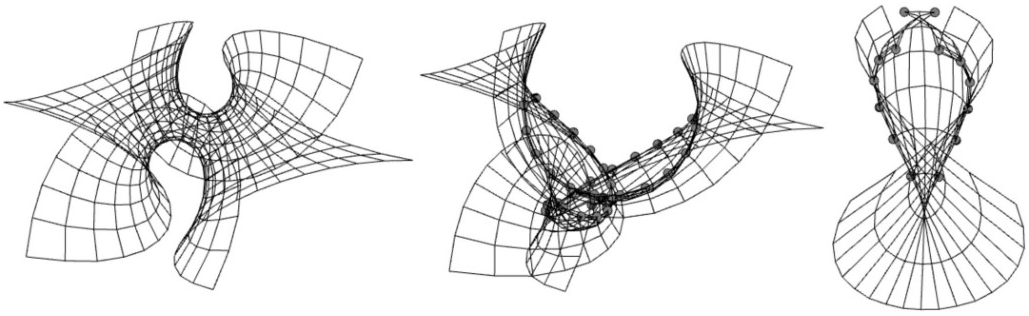}
\end{center}
\caption{A discrete higher-order Enneper minimal surface in $\mathbb{R}^3$, its parallel surface at distance $20$, and a one-third piece of the parallel surface, with singular vertices marked.}
\end{figure}

\section{Discrete maximal surfaces and their parallel surfaces}\label{seceight}

Here we give the analogous situation as in \S \ref{secseven}, but now in 
Lorentz $3-$space.  

\subsection{Smooth maximal surfaces in $\mathbb{R}^{2,1}$}\label{sdm-subsec1}

First we briefly review smooth maximal surfaces.  Let 
\[
\mathbb{R}^{2,1}:= (\{ (x_1,x_2,x_0)^t | x_j \in \mathbb{R}\} , \langle
\cdot ,\cdot \rangle )
\]
be 3-dimensional Minkowski space with the Lorentz metric signature 
$(+,+,-)$.  

Note that, for fixed $d \in \mathbb{R}$ and vector $n \in \mathbb{R}^{2,1}
\setminus \{ 0 \}$, a plane $\mathcal{P}=\{ x \in \mathbb{R}^{2,1} \mid
\langle x,n \rangle =d \}$ is {\em spacelike} or {\em timelike} or {\em
lightlike} when $n$ is timelike or spacelike or lightlike, respectively.
Furthermore, a smooth surface in $\mathbb{R}^{2,1}$ is called spacelike if its
tangent planes are spacelike.
Let 
\[ 
\hat{f} :M^2 \rightarrow \mathbb{R}^{2,1} 
\]
be a conformal immersion,
where $M^2$ is a simply-connected domain 
in $\mathbb{C}$ with complex coordinate $z=u+iv \ (u, v \in \mathbb{R})$.
$\hat{f}$ is a maximal surface if it is spacelike (which follows automatically
from the conformality condition) with mean curvature identically $0$.

Defining
\begin{eqnarray*}
&&\mathbb{H}^2_+:=\{ x=(x_1,x_2,x_0)^t \in \mathbb{R}^{2,1} | \langle x,x \rangle =-1,
\ x_0 >0 \}, \\
&&\mathbb{H}^2_-:=\{ x=(x_1,x_2,x_0)^t \in \mathbb{R}^{2,1} | \langle x,x \rangle =-1,
\ x_0 <0 \} ,
\end{eqnarray*}
we have the following statement, analogous to the case of smooth minimal
surfaces in $\mathbb{R}^3$, as in \eqref{minsurfeqn} (and 
having a similar proof): 
Away from umbilic points, smooth maximal surfaces lie in the class of
isothermic surfaces, and each such surface 
can be represented with isothermic coordinates $(u,v)$, $z=u+iv$, as 
\begin{equation}\label{sdm-eq2}
\hat{f}={\rm Re} \int \left(
1+g^2, i(1-g^2), -2g
\right)^{t} \frac{dz}{g^{\prime}}
\end{equation}
for some choice of smooth holomorphic function $g:M^2 \rightarrow 
\mathbb{C}$.  
The Gauss map of $\hat{f}$ lies in $\mathbb{H}^2_{+} \cup \mathbb{H}^2_{-}$, 
and its stereographic projection to $\mathbb{C}$ is $g$.  

Differentiating Equation \eqref{sdm-eq2} gives the following equations (analogous to Lemma \ref{lem:7pt1}):
\begin{eqnarray*}
\hat{f}_u = {\rm Re} \left( \frac{1+g^2}{g_u}, \frac{i(1-g^2)}{g_u},
-\frac{2g}{g_u} \right)^{t} , \\ 
\hat{f}_v = -{\rm Re} \left( \frac{1+g^2}{g_v}, \frac{i(1-g^2)}{g_v},
-\frac{2g}{g_v} \right)^{t}  . 
\end{eqnarray*}

\begin{remark}
Unlike the case of the Weierstrass representation for minimal surfaces in
$\mathbb{R}^3$, smooth maximal surfaces in $\mathbb{R}^{2,1}$ have
singularities when $|g|=1$, because the metrics
\begin{equation}\label{sdm-eq3}
\frac{(1-|g|^2)^2}{|g^{\prime}|^2}dzd\bar{z}
\end{equation}
of the smooth maximal surfaces can degenerate, due to the minus sign
in the numerator in Equation \eqref{sdm-eq3}, unlike the plus sign we 
have for the metrics of minimal surfaces in $\mathbb{R}^3$, as in 
\eqref{sdm-eq3-mincase}.  
\end{remark}

The principal curvatures of $\hat{f}$ are (analogous to 
\eqref{eqn:kappaoneandtwo}) 
\[ \pm \kappa_1 = \mp \kappa_2 = \frac{2 |g^\prime|^2}{(1-|g|^2)^2} \; . \]

By exactly the same proof as for Lemma \ref{smoothMSparsurfs}, we have: 

\begin{lemma}\label{smoothMSparsurfs-max}
Any parallel surface of a maximal surface in $\mathbb{R}^{2,1}$ without umbilics will have constant harmonic mean curvature, and will have neither parabolic nor flat points.  
\end{lemma}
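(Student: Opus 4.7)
My plan is to mimic the proof of Lemma \ref{smoothMSparsurfs} almost verbatim, since the parallel surface formulas for Gaussian and mean curvature take the same algebraic form in $\mathbb{R}^{2,1}$ as in $\mathbb{R}^{3}$. First I would record the relation between the shape operators of $\hat{f}$ and of its parallel surface $\hat{f}^{\rho} = \hat{f} + \rho \hat{\nu}$ at oriented distance $\rho$, where $\hat{\nu}$ is the timelike unit normal. Since $d\hat{\nu} = -\kappa_i\, d\hat{f}$ on principal directions as in the Riemannian case, the principal curvatures of the parallel surface are $\hat{\kappa}_i = \kappa_i/(1-\rho \kappa_i)$, leading to the identities
\[
\hat{K} \;=\; \frac{K}{1 - 2 \rho H + \rho^2 K}, \qquad
\hat{H} \;=\; \frac{H - \rho K}{1 - 2 \rho H + \rho^2 K},
\]
exactly as in the Riemannian setting.

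Specializing to a maximal surface ($H \equiv 0$) reduces these to
\[
\hat{K} \;=\; \frac{K}{1 + \rho^2 K}, \qquad
\hat{H} \;=\; \frac{-\rho K}{1 + \rho^2 K},
\]
from which $\hat{H}/\hat{K} = -\rho$ is constant on the parallel surface. Since the harmonic mean curvature is $1/\hat{\kappa}_1 + 1/\hat{\kappa}_2 = 2\hat{H}/\hat{K}$, this gives the constancy claim immediately.

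For the absence of parabolic or flat points I would argue as follows. On a maximal surface the condition $H=0$ forces $\kappa_1 = -\kappa_2$, so $K = -\kappa_1^2$; the no-umbilics hypothesis then implies $\kappa_1,\kappa_2 \neq 0$, and hence $K \neq 0$ at every point. Consequently $\hat{\kappa}_i = \kappa_i/(1-\rho\kappa_i) \neq 0$ wherever the parallel surface is defined, so $\hat{K} \neq 0$ everywhere and no point of $\hat{f}^{\rho}$ is parabolic or flat.

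The only subtlety, and the place where a Lorentzian proof could in principle differ from the Riemannian one, is verifying that the sign conventions in $\mathbb{R}^{2,1}$ still produce the same parallel-surface formulas; in particular one must confirm that the timelike character of $\hat{\nu}$ does not flip a sign in $d\hat{\nu} = -\kappa\,d\hat{f}$. Once this is checked, the Riemannian argument transfers directly. Since the statement already quotes Lemma \ref{smoothMSparsurfs} as its model, I would keep the written proof minimal, noting simply that this sign check goes through and that the rest of the argument is identical.
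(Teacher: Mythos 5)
Your proposal is correct and follows exactly the route the paper takes: the paper proves this lemma simply by declaring it to be ``by exactly the same proof as for Lemma \ref{smoothMSparsurfs}'', i.e.\ by the parallel-surface curvature formulas specialized to $H\equiv 0$, yielding $\hat H/\hat K=-\rho$ and $\hat K\neq 0$ from the no-umbilics hypothesis. Your added check that the timelike normal does not alter the sign in the parallel-surface formulas is a sensible piece of diligence that the paper leaves implicit.
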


\subsection{Discrete maximal surfaces in
$\mathbb{R}^{2,1}$}\label{sdm-subsec2}

The following theorem was proven in \cite{Yashi2014} (analogous to \eqref{dismin}): 

\begin{proposition}\label{dis-maximal-weierstrass}
Discrete maximal surfaces $\hat{f}$ (defined as discrete surfaces with $H_{\hat{f}} \equiv 0$ 
in $\mathbb{R}^{2,1}$), maps from $\mathbb{Z}^2$ (or 
some subdomain) to $\mathbb{R}^{2,1}$, can be constructed using discrete
holomorphic functions $g$ from the same domain to the
complex plane $\mathbb{C}$ by solving
\begin{equation}\label{dismax}
\hat{f}_* -\hat{f}_i = \frac{\alpha_{i*} }{2}{\rm Re} \left( \frac{1+g_*g_i}{g_*-g_i}, \frac{ \sqrt{-1} (1- g_*g_i)}{g_*-g_i} ,  -\frac{g_*+g_i}{g_*-g_i} \right)^{t} , 
\end{equation}
with $\alpha_{i*}$ the cross ratio factorizing functions for $g$.
Conversely, any discrete maximal surface satisfies \eqref{dismax} for
some discrete holomorphic function $g$.
\end{proposition}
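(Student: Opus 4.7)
The plan is to prove both directions by direct computation, following the blueprint of the minimal surface case from \cite{BobPink2} and adapting the signs to the Lorentzian setting, as in \cite{Yashi2014}.

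For the forward direction, assume $g:\mathbb{Z}^2 \to \mathbb{C}$ is a discrete holomorphic function with cross ratio factorizing functions $\alpha_{ij}$, $\alpha_{i\ell}$, and define $\hat f$ by the edge difference equation \eqref{dismax}. First I would verify that these edge vectors close up around each quadrilateral, i.e.\ $d\hat f_{ij} + d\hat f_{jk} = d\hat f_{i\ell} + d\hat f_{\ell k}$, so that $\hat f$ is well-defined on $\mathbb{Z}^2$. This is a component-by-component calculation that uses the fact that $\alpha_{ij} = \alpha_{k\ell}$ and $\alpha_{jk} = \alpha_{i\ell}$ (a consequence of the cross ratio factorization), combined with the cross ratio identity $(g_i-g_j)(g_k-g_\ell)/((g_j-g_k)(g_\ell-g_i)) = \alpha_{ij}/\alpha_{i\ell}$; the three components cancel in the same manner as the Bobenko--Pinkall Euclidean case, only with the sign changes reflecting the signature of $\mathbb{R}^{2,1}$.

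Next I would verify $H_{\hat f}\equiv 0$. Defining the Gauss map $\hat n$ as the preimage of $g$ under stereographic projection to $\mathbb{H}^2_+ \cup \mathbb{H}^2_-$, one checks by direct computation that $d\hat n_{i*} \parallel d\hat f_{i*}$ and that the corresponding edge principal curvatures are
\[
\kappa_{i*} \;=\; -\frac{4\,|dg_{i*}|^2}{\alpha_{i*}(1-|g_i|^2)(1-|g_*|^2)}.
\]
Substituting these into the formula for $H_{\hat f}$ from Lemma \ref{GMbyPrinp} and again using $\alpha_{ij}=\alpha_{k\ell}$, $\alpha_{jk}=\alpha_{i\ell}$, the numerator $\kappa_{ij}\kappa_{k\ell}-\kappa_{i\ell}\kappa_{jk}$ vanishes identically. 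This confirms $\hat f$ is a discrete maximal surface, and Lemma \ref{princurv} together with Proposition \ref{ConsQuanBrBi} in the ``parallel-of-maximal'' case guarantees the principal net and isothermic conditions needed for $\hat f$ to be a legitimate discrete surface.

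For the converse, given a discrete surface $\hat f$ with $H_{\hat f} \equiv 0$, I would define $g$ as the stereographic projection of its Gauss map $\hat n$ and show that $g$ is discrete holomorphic, i.e.\ that the cross ratio on each quadrilateral factorizes. The key input here is that $H_{\hat f}\equiv 0$ combined with the principal net condition places $\hat f$ in the framework of discrete isothermic surfaces (Proposition \ref{ConsQuanBrBi}, with the conserved quantity associated to the point-sphere congruence being lightlike); stereographic projection preserves this factorization property, producing the functions $\alpha_{ij}$, $\alpha_{i\ell}$. Once $g$ is in hand, I would recover $\hat f$ by checking that both sides of \eqref{dismax} give the same edge vector, for example by observing that the right-hand side of \eqref{dismax} is (up to the scalar $\alpha_{i*}/2$) the unique vector in $\mathbb{R}^{2,1}$ simultaneously orthogonal to $\hat n_i + \hat n_*$ and lying in the reflection-symmetric direction determined by $g_i, g_*$, which is precisely how the isothermic dual is constructed.

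The main obstacle will be the converse direction: extracting a discrete holomorphic $g$ from an abstractly given discrete maximal $\hat f$ requires pinning down exactly how the isothermic/Koenigs structure forced by $H\equiv 0$ interacts with the stereographic parametrization of $\hat n$, especially near faces where $|g_i|$ crosses $1$ (the singular face locus). I would handle this by working first on open sets where $1-|g|^2$ does not change sign, establishing the identity there, and then using the polynomial nature of \eqref{dismax} in $g$ to extend across the singular faces.
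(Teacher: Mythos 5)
The paper does not actually prove this proposition: it is quoted verbatim from \cite{Yashi2014}, so there is no internal proof to compare against. That said, your outline reproduces the standard argument (Bobenko--Pinkall's \cite{BobPink2} scheme transplanted to signature $(+,+,-)$), and the computational core is sound: the closure of \eqref{dismax} around a quadrilateral follows from the cross-ratio factorization together with $\alpha_{ij}=\alpha_{\ell k}$, $\alpha_{i\ell}=\alpha_{jk}$, and your verification that the numerator $\kappa_{ij}\kappa_{k\ell}-\kappa_{i\ell}\kappa_{jk}$ of $H_{\hat f}$ in Lemma \ref{GMbyPrinp} vanishes is exactly the squared modulus of the cross-ratio identity, so $H_{\hat f}\equiv 0$ does follow. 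For the converse, the correct chain is the one you gesture at --- $\mathcal{A}(\hat f,\hat n)=0$ forces $\hat f$ and $\hat n$ to be dual (Christoffel-related) quadrilaterals in the sense of \cite{BPW-MathAnn}, and a circular Koenigs net in $\mathbb{H}^2_\pm$ stereographically projects to a map with factorizable real cross-ratios --- and this is how \cite{Yashi2014} proceeds.

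Two points need tightening. First, your appeal to Lemma \ref{princurv} and Proposition \ref{ConsQuanBrBi} to supply the principal-net and isothermic conditions is misplaced: Lemma \ref{princurv} concerns the $\mathbb{H}^3$/$\mathbb{S}^{2,1}$ representation, and Proposition \ref{ConsQuanBrBi} already presupposes a notion of discrete maximal surface, so invoking it here is circular. Concircularity of the faces of $\hat f$ should instead be checked directly (dual quadrilaterals of circular quadrilaterals are circular), and in the converse direction the circularity and edge-parallel Gauss map come for free from Definition \ref{LegLiftDefn}, since ``discrete surfaces'' in this paper are projections of discrete Legendre immersions. Second, the closing worry about ``extending across singular faces'' is a non-issue: \eqref{dismax} is regular wherever $g_*\neq g_i$ (which is assumed throughout), and $|g|=1$ only makes a face singular in the sense of Definition \ref{defnOfSingFace}; it does not degenerate the representation formula, so no separate continuation argument is needed.
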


\begin{lemma}
The principal curvatures $\kappa_{i*}$ of $\hat{f}$ defined on edges are 
\[ 
\kappa_{i*} = - \frac{4 |dg_{i*}|^2}{\alpha_{i*} (1-|g_i|^2)(1-|g_*|^2)} \; . 
\]
\end{lemma}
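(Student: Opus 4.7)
My plan is to compute $\kappa_{i*}$ directly from the defining relation $d\hat{n}_{i*} = -\kappa_{i*}\,d\hat{f}_{i*}$ used in Lemma \ref{princurv}, in complete analogy with the (unstated) proof of the corresponding formula for discrete minimal surfaces in $\mathbb{R}^3$ in Section \ref{secseven}. The only essential difference is that the Euclidean unit normal into $\mathbb{S}^2$ is replaced by the Lorentzian unit normal into $\mathbb{H}^2_{+}\cup\mathbb{H}^2_{-}$, which is the source of the sign change inside the $(1\pm|g|^2)$ factors.

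First, following the smooth case reviewed in Subsection \ref{sdm-subsec1}, I would take the discrete Gauss map to be the pointwise inverse stereographic projection
\begin{equation*}
\hat{n}_i = \frac{1}{1-|g_i|^2}\bigl(2\,\mathrm{Re}(g_i),\ 2\,\mathrm{Im}(g_i),\ 1+|g_i|^2\bigr)^{t},
\end{equation*}
which satisfies $\langle \hat{n}_i,\hat{n}_i\rangle = -1$ and lies in $\mathbb{H}^2_{+}$ or $\mathbb{H}^2_{-}$ according to the sign of $1-|g_i|^2$. This is forced by requiring $\hat{n}$ to be a unit Lorentzian normal whose stereographic projection recovers the Weierstrass datum $g$.

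Second, I would compute $d\hat{n}_{i*} = \hat{n}_*-\hat{n}_i$ by clearing denominators over $(1-|g_i|^2)(1-|g_*|^2)$. Each of the three component numerators is a polynomial in $g_i,g_*,\bar{g}_i,\bar{g}_*$ that factors so as to expose $g_*-g_i$ (or its conjugate), and a direct collection of terms shows that $d\hat{n}_{i*}$ is a real scalar multiple of the vector
\begin{equation*}
\mathrm{Re}\left(\frac{1+g_*g_i}{g_*-g_i},\ \frac{\sqrt{-1}\,(1-g_*g_i)}{g_*-g_i},\ -\frac{g_*+g_i}{g_*-g_i}\right)^{t},
\end{equation*}
which by Proposition \ref{dis-maximal-weierstrass} is exactly $\tfrac{2}{\alpha_{i*}}\,d\hat{f}_{i*}$. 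Reading off the proportionality constant then gives $d\hat{n}_{i*} = -\kappa_{i*}\,d\hat{f}_{i*}$ with
\begin{equation*}
\kappa_{i*} = -\frac{4\,|dg_{i*}|^2}{\alpha_{i*}\,(1-|g_i|^2)(1-|g_*|^2)},
\end{equation*}
as claimed.

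The main obstacle is the algebraic bookkeeping in step two; it is the direct Lorentzian analogue of the minimal case in $\mathbb{R}^3$, where the denominator carries $(1+|g_i|^2)(1+|g_*|^2)$ instead. A cleaner route that avoids component-wise computation is to use the $2\times 2$-matrix (spinorial) model of $\mathbb{R}^{2,1}$, writing both $d\hat{f}_{i*}$ and $d\hat{n}_{i*}$ as conjugations of fixed elementary matrices by frames of the form $E_iL_i\overline{E_iL_i}^{t}$, which makes the parallelism $d\hat{n}_{i*}\parallel d\hat{f}_{i*}$ immediate and exposes the scalar ratio as a single fraction. A consistency check is that the resulting $\kappa_{i*}$ is unbounded precisely where $|g|\to 1$, matching the singular behavior of the smooth representation \eqref{sdm-eq3} and justifying the definition of singular vertices to be used in Section \ref{seceight}.
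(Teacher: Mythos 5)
Your overall strategy is the right one (the paper itself states this lemma without proof): define the discrete Gauss map $\hat n$ pointwise from $g$, verify the edge-parallelism $d\hat{n}_{i*}\parallel d\hat{f}_{i*}$ against the representation \eqref{dismax}, and read off $\kappa_{i*}$ from $d\hat{n}_{i*}=-\kappa_{i*}\,d\hat{f}_{i*}$. But the explicit Gauss map you wrote down is not the unit normal of the surface produced by \eqref{dismax}, and this breaks the key step. With the paper's conventions --- coordinates ordered $(x_1,x_2,x_0)^t$ and signature $(+,+,-)$, so the \emph{third} slot is the timelike one, and with $-2g$ sitting in that slot in \eqref{sdm-eq2} and \eqref{dismax} --- the vector $\frac{1}{1-|g|^2}\bigl(2\,\mathrm{Re}\,g,\,2\,\mathrm{Im}\,g,\,1+|g|^2\bigr)^t$ does satisfy $\langle\cdot,\cdot\rangle=-1$ but is \emph{not} orthogonal to the surface: already in the smooth case $\bigl\langle(1+g^2,\,i(1-g^2),\,-2g)^t,\,(g+\bar g,\,-i(g-\bar g),\,1+g\bar g)^t\bigr\rangle=4g(1+|g|^2)\neq 0$. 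Correspondingly, in your step two $d\hat{n}_{i*}$ is \emph{not} a real scalar multiple of $d\hat{f}_{i*}$: writing $c=\frac{4|dg_{i*}|^2}{\alpha_{i*}(1-|g_i|^2)(1-|g_*|^2)}$, the two spacelike components of $d\hat n_{i*}$ come out equal to $c$ times the corresponding components of $d\hat f_{i*}$, while the timelike component comes out equal to $-c$ times the timelike component of $d\hat f_{i*}$ (its numerator reduces to $2(|g_*|^2-|g_i|^2)$, to be compared with $-\frac{\alpha_{i*}(|g_*|^2-|g_i|^2)}{2|dg_{i*}|^2}$ for $\hat f$). So the ``direct collection of terms'' you invoke does not close up as written.

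The fix is local: take
\[
\hat n_i=\frac{1}{1-|g_i|^2}\bigl(2\,\mathrm{Re}\,g_i,\ 2\,\mathrm{Im}\,g_i,\ -(1+|g_i|^2)\bigr)^t ,
\]
which is still unit timelike, lies in $\mathbb{H}^2_{+}\cup\mathbb{H}^2_{-}$, corresponds to the other choice of stereographic projection, and is genuinely orthogonal to $d\hat f$. With this $\hat n$ all three components of $d\hat n_{i*}$ equal $c$ times those of $d\hat f_{i*}$; for instance the first-component numerator collects to $(g_*-g_i)(1+\bar g_i\bar g_*)+(\bar g_*-\bar g_i)(1+g_ig_*)=2|dg_{i*}|^2\,\mathrm{Re}\frac{1+g_ig_*}{g_*-g_i}$, matching $\frac{2}{\alpha_{i*}}|dg_{i*}|^2$ times the first component of $d\hat f_{i*}$ after dividing by $(1-|g_i|^2)(1-|g_*|^2)$. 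Hence $d\hat n_{i*}=c\,d\hat f_{i*}$ and $\kappa_{i*}=-c$, which is exactly the stated formula. The remainder of your write-up (the analogy with the $(1+|g_i|^2)(1+|g_*|^2)$ denominator in the $\mathbb{R}^3$ case, and the consistency check that $\kappa_{i*}$ blows up as $|g|\to 1$) is fine once this sign is repaired.
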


We recall the following definition of singular faces as in \cite{Yashi2014}: 

\begin{defn}\label{defnOfSingFace}
A face of $\hat{f}$ with vertices 
$\hat{f}_i,\hat{f}_j,\hat{f}_k,\hat{f}_\ell$ is {\em singular} if those four vertices lie 
in a non-spacelike plane.  
\end{defn}

It was proven in \cite{Yashi2014} that 
a quadrilateral of $\hat{f}$ is singular if and only if the 
corresponding circumcircle of $g$ intersects the unit circle $\mathbb{S}^1 \subset \mathbb{C}$. From this we can conclude the following theorem: 

\begin{theorem}\label{thm8pt4}
Let $p_-$, $p$ and $p_+$ be three consecutive vertices in one 
direction in the lattice domain of a 
maximal surface $\hat{f}$ in $\mathbb{R}^{2,1}$, 
with corresponding values $g_-$, 
$g$ and $g_+$ for the discrete holomorphic function in the 
Weierstrass type representation \eqref{dismax}.  Suppose $p$ is an FPS vertex.  
Then the pair of faces adjacent to the edge $p_-p$ are singular, or 
the pair of faces adjacent to the edge $pp_+$ are singular, 
including the possibility that all four faces are singular.  
\end{theorem}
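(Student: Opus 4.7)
My plan is to translate the FPS condition at $p$ into a statement about the positions of the three $g$-values $g_-, g, g_+$ relative to the unit circle $\mathbb{S}^1 \subset \mathbb{C}$, and then to invoke the criterion from \cite{Yashi2014} that a face of $\hat f$ is singular if and only if the circumcircle of its (concircular) $g$-values meets $\mathbb{S}^1$. The connecting geometric fact is elementary: any circle in $\mathbb{C}$ passing through two points that lie on opposite sides of $\mathbb{S}^1$ must itself intersect $\mathbb{S}^1$, by connectedness.

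First, I will use the formula for the principal curvatures from the preceding lemma to compute
\[
\kappa_{p_-p}\,\kappa_{pp_+} \;=\; \frac{16\,|dg_{p_-p}|^2\,|dg_{pp_+}|^2}{\alpha_{p_-p}\,\alpha_{pp_+}\,(1-|g_-|^2)\,(1-|g|^2)^2\,(1-|g_+|^2)}.
\]
Since the numerator is strictly positive (the $dg$'s are nonzero by the non-umbilic assumption) and $(1-|g|^2)^2 \geq 0$, under the standard sign convention $\alpha_{p_-p}\alpha_{pp_+} > 0$ (the cross ratio factorizing function has constant sign along each coordinate direction) and assuming $|g|\neq 1$, the FPS inequality $\kappa_{p_-p}\kappa_{pp_+} \leq 0$ reduces to $(1-|g_-|^2)(1-|g_+|^2) \leq 0$. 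In other words, $g_-$ and $g_+$ lie on opposite sides of $\mathbb{S}^1$ (with the weak inequality allowing one of them to lie on $\mathbb{S}^1$).

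Next I will split on the position of $g$. If $|g| \leq 1$, then since $g_-$ and $g_+$ lie on opposite sides of $\mathbb{S}^1$, one of them (say $g_+$, the other case being symmetric) satisfies $|g_+| \geq 1$, so $g$ and $g_+$ lie on opposite sides of $\mathbb{S}^1$; hence the two faces adjacent to the edge $pp_+$, whose circumcircles both contain $g$ and $g_+$, must meet $\mathbb{S}^1$ and are therefore singular by \cite{Yashi2014}. The symmetric argument treats $|g| \geq 1$. Note that if $g$ happens to be on the opposite side from both $g_-$ and $g_+$, then both pairs of adjacent faces come out singular, exactly as allowed in the statement.

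The main obstacle is justifying the sign assumption $\alpha_{p_-p}\alpha_{pp_+} > 0$ that is used to pass from the FPS inequality to the position condition on the $g$-values; this is the standard sign convention for cross ratio factorizing functions along a single coordinate direction in discrete isothermic surface theory, but should be made explicit in the proof. A secondary technical issue is the degenerate boundary case where one of $|g_-|, |g|, |g_+|$ equals $1$ and the formula for $\kappa$ diverges; in that case the argument is actually easier, since the vertex with $|g|=1$ already lies on $\mathbb{S}^1$ and is contained in the circumcircle of every face incident to it, so those faces are automatically singular.
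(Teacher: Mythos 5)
Your argument is correct and follows essentially the same route as the paper's proof: compute $\kappa_{p_-p}\kappa_{pp_+}$ from the edge principal curvature formula, reduce the FPS condition to $(1-|g_-|^2)(1-|g_+|^2)\leq 0$, deduce that $g$ and one of $g_\pm$ lie on opposite sides of $\mathbb{S}^1$, and invoke the circumcircle criterion from \cite{Yashi2014}. You are in fact somewhat more careful than the paper, which leaves the positivity of $\alpha_{p_-p}\alpha_{pp_+}$ (forced by the cross-ratio condition $\alpha_{ij}/\alpha_{i\ell}<0$) and the degenerate cases $|g_\pm|=1$, $|g|=1$ implicit.
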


\begin{proof}
Because
\[ 
\kappa_{p_-p}\kappa_{pp_+} = (\text{nonnegative term}) 
(1-|g_{p_-}|^2)(1-|g_{p_+}|^2) \ ,
\]
$\kappa_{p_-p}\kappa_{pp_+}<0$ implies 
at least one of $(1-|g_{p}|^2)(1-|g_{p_-}|^2)$ or 
$(1-|g_{p}|^2)(1-|g_{p_+}|^2)$ is negative, and so at least one of the edges 
$p_-p$ or $pp_+$ has two adjacent singular faces.  
\end{proof}

Theorem \ref{thm8pt4} 
indicates one reason why we should regard, in the case of discrete 
maximal surfaces, all FPS vertices as singular.  In fact, like in the case of 
discrete minimal surfaces, Lemma 
\ref{smoothMSparsurfs-max} indicates we can 
say the same of parallel surfaces of discrete maximal surfaces as well: 

\begin{defn}\label{defn:maxsurfparallelcase}
For any discrete maximal surface, we say 
that $(m,n)$ is a {\em singular vertex} of any given parallel surface 
(allowing also for the initial maximal surface itself) if the principal curvatures of the parallel surface satisfy
\begin{eqnarray*} 
\kappa_{(m-1,n),(m,n)} \cdot \kappa_{(m,n),(m+1,n)} \leq  0  \ \ \text{or} \\
\kappa_{(m,n-1),(m,n)} \cdot \kappa_{(m,n),(m,n+1)} \leq  0 \ . 
\end{eqnarray*}
\end{defn}

\begin{figure}
\begin{center}
\includegraphics[height=70mm]{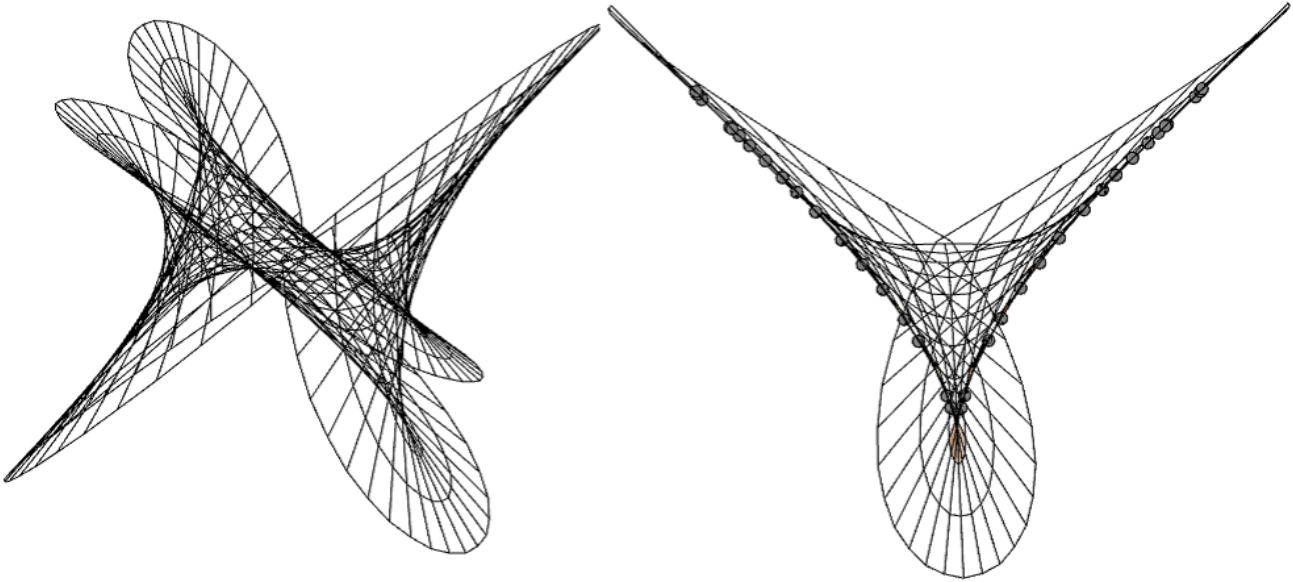} \\
\includegraphics[height=70mm]{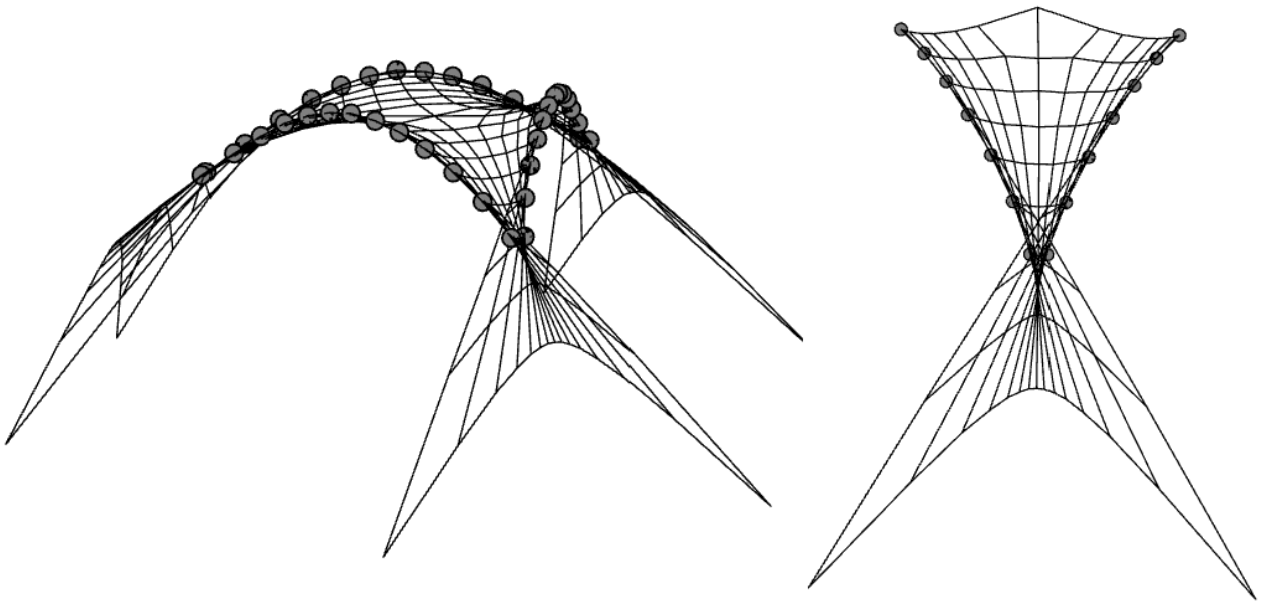}
\end{center}
\caption{A discrete higher-order Enneper-type maximal surface in $\mathbb{R}^{2,1}$, its parallel surface at distance $20$, and a one-third piece of the parallel surface, with singular vertices marked. }
\end{figure}

\section{Singular faces on discrete CMC $1$ surfaces with Weierstrass-type representations in $\mathbb{S}^{2,1}$}\label{secnine}

As in Definition \ref{defnOfSingFace}, a quadrilateral of a discrete CMC $1$ 
surface $\hat n$ with Weierstrass-type representation in $\mathbb{S}^{2,1}$ is singular if it does not lie in a 
spacelike plane. We give a geometric condition (Theorem \ref{thm7pt2next}) for when a 
quadrilateral of $\hat n$ is singular, analogous to a condition in 
the case of discrete maximal surfaces (see \cite{Yashi2014}).  We then prove a relation (Corollary \ref{SingFaceVert}) between FPS vertices and singular faces on 
discrete CMC $1$ faces in $\mathbb{S}^{2,1}$ (similar to Theorem 
\ref{thm8pt4}), a relation that helps indicate which of the FPS vertices are actually singular.  

The condition for a singular face to occur is 
\begin{equation}\label{eqn:detcond}
(d\hat f_{ij},d\hat f_{ij})(d\hat f_{i\ell},d\hat f_{i\ell})-
(d\hat f_{ij,}d\hat f_{i\ell})^2 \leq 0 \; . 
\end{equation}

In the smooth CMC $1$ case, with $g$ as in \S \ref{BrBi}, the singularities occur exactly where $|g|=1$.  The condition is still $|g|=1$ even under the coordinate transformation $z \to \sqrt{\lambda \alpha} z$. The next proposition and theorem are the corresponding condition in the discrete case to $|g|=1$, and can be proven by 
computationally spelling out Equation \eqref{eqn:detcond}.  
We define
\begin{eqnarray*}
h_1 = (1-|g_j|^2) |dg_{i\ell}|^2 (1-\lambda \alpha_{ij}) \; , \\
h_2 = (1-|g_\ell|^2) |dg_{ij}|^2 (1-\lambda \alpha_{i\ell}) \; , \\ 
h_3 = (1-|g_i|^2) |dg_{j\ell}|^2 \; . 
\end{eqnarray*}

\begin{proposition}\label{thm7pt2}
A face of a discrete 
CMC $1$ surface $\hat n$ with Weierstrass-type representation in $\mathbb{S}^{2,1}$ is singular if and only if 
\[ 
\mathcal{H}:=h_1^2+h_2^2+h_3^2-(h_2-h_1)^2-(h_3-h_1)^2-(h_3-h_2)^2 \leq 0 \; . 
\]
\end{proposition}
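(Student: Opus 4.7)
The plan is to convert \eqref{eqn:detcond} (read with $\hat{f}$ there replaced by $\hat{n}$) into a polynomial identity in the Weierstrass data $(g,\alpha,\lambda)$ via a direct matrix calculation. Specialize the representation of \S \ref{secfive} to $t=-1$, the value producing CMC $1$ surfaces in $\mathbb{S}^{2,1}$ (see \S \ref{BrBi}), so that $\mathcal{T}_i = 1-|g_i|^2$. A short calculation gives $\hat{n}_i = \mathrm{sgn}(\mathcal{T}_i)\, E_i M_i \overline{E_i}^t$ with $J=\mathrm{diag}(1,-1)$ and
\[
M_i := L_i J \overline{L_i}^t = \begin{pmatrix} -\mathcal{T}_i & -g_i \\ -\bar g_i & 1 \end{pmatrix},
\]
so that, with $A_{ij} := E_i^{-1} E_j = (1-\lambda\alpha_{ij})^{-1/2}\bigl(\begin{smallmatrix}1 & dg_{ij}\\ \lambda\alpha_{ij}/dg_{ij} & 1\end{smallmatrix}\bigr)$,
\[
d\hat{n}_{ij} = E_i\bigl(\mathrm{sgn}(\mathcal{T}_j)\, A_{ij} M_j \overline{A_{ij}}^t - \mathrm{sgn}(\mathcal{T}_i)\, M_i\bigr)\overline{E_i}^t,
\]
and similarly for $d\hat{n}_{i\ell}$.

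Because the Minkowski metric on Hermitian $2\times 2$ matrices is $\mathrm{SL}_2\mathbb{C}$-invariant under $X \mapsto F X \overline{F}^t$, the three inner products $(d\hat{n}_{ij},d\hat{n}_{ij})$, $(d\hat{n}_{ij},d\hat{n}_{i\ell})$ and $(d\hat{n}_{i\ell},d\hat{n}_{i\ell})$ are independent of the conjugating factor $E_i$ and therefore depend only on $g_i, g_j, g_\ell$ together with $\alpha_{ij},\alpha_{i\ell},\lambda$ (the vertex $g_k$ never enters, as the relevant edges meet at $i$). Expanding and clearing the common positive factor $(1-\lambda\alpha_{ij})^2(1-\lambda\alpha_{i\ell})^2$ produces polynomials manifestly invariant under the involution $(j,\alpha_{ij}) \leftrightarrow (\ell,\alpha_{i\ell})$; since this involution exchanges $h_1 \leftrightarrow h_2$ and fixes $h_3$, and since the left side of \eqref{eqn:detcond} is likewise invariant, only combinations symmetric in $(h_1,h_2)$ and homogeneous in $h_3$ can survive. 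Collecting terms then yields
\[
(d\hat{n}_{ij},d\hat{n}_{ij})(d\hat{n}_{i\ell},d\hat{n}_{i\ell}) - (d\hat{n}_{ij},d\hat{n}_{i\ell})^2 = C \cdot \mathcal{H}
\]
with an explicit $C > 0$, from which the proposition follows.

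The main obstacle is the algebraic bookkeeping, since each $(d\hat{n}_{i*}, d\hat{n}_{i*})$ starts out as a rational expression of rather high degree in $g_i,g_j,g_\ell$ and their conjugates. The crucial simplification is the identity
\[
-\mathcal{T}_j - dg_{ij}\,\bar g_j - g_i\,\overline{dg_{ij}} = -\mathcal{T}_i,
\]
which collapses the $(1,1)$-entry of $A_{ij} M_j \overline{A_{ij}}^t$ to $-\mathcal{T}_i/(1-\lambda\alpha_{ij})$ and drastically reduces the complexity of $d\hat{n}_{ij}$; it is also what allows the factors $1-|g_i|^2$, $1-|g_j|^2$, $1-|g_\ell|^2$ appearing in $h_3$, $h_1$, $h_2$ to emerge cleanly from the calculation, rather than as artificial combinations.
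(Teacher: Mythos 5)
Your proposal is correct and follows essentially the same route as the paper, which itself only states that the proposition ``can be proven by computationally spelling out Equation \eqref{eqn:detcond}'': a direct expansion of the Gram determinant of $d\hat n_{ij}$ and $d\hat n_{i\ell}$ in the Weierstrass data at $t=-1$. Your checkable intermediate steps — the form $M_i = \bigl(\begin{smallmatrix} -\mathcal{T}_i & -g_i \\ -\bar g_i & 1\end{smallmatrix}\bigr)$, the $\mathrm{SL}_2\mathbb{C}$-invariance stripping off $E_i$, and the identity $-\mathcal{T}_j - dg_{ij}\bar g_j - g_i\overline{dg_{ij}} = -\mathcal{T}_i$ collapsing the $(1,1)$-entry — all verify, and the final identification with $C\cdot\mathcal{H}$ is left as the same bookkeeping the authors also leave implicit.
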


\begin{figure}
\begin{center}
\includegraphics[width=120mm]{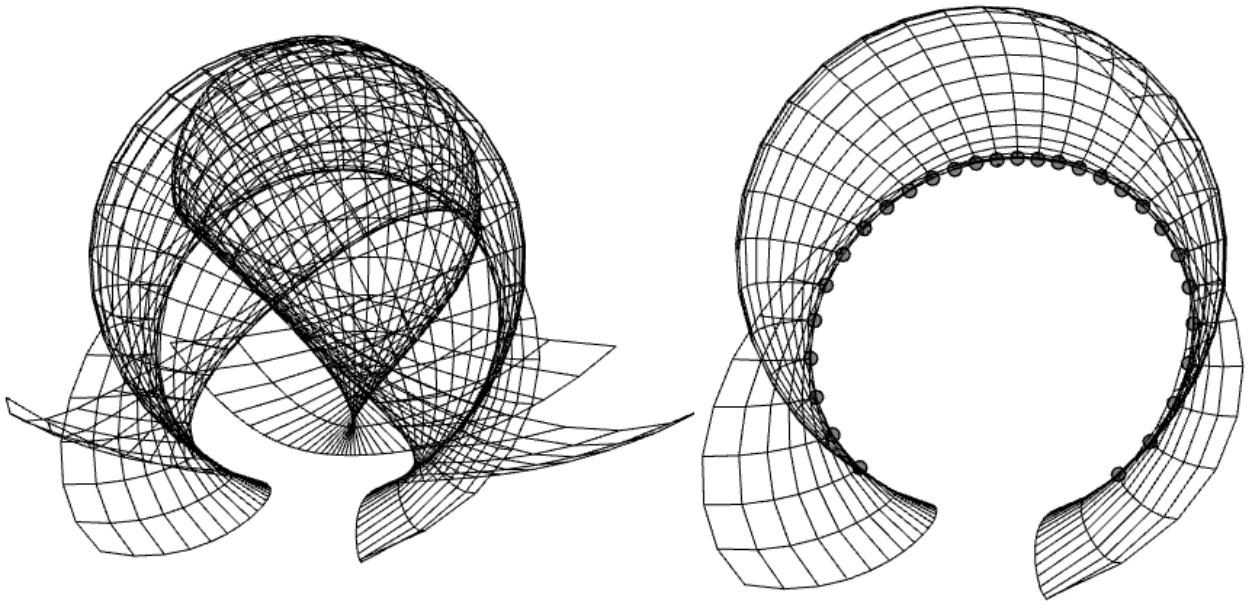} \\
\hspace{-10mm} \includegraphics[width=120mm]{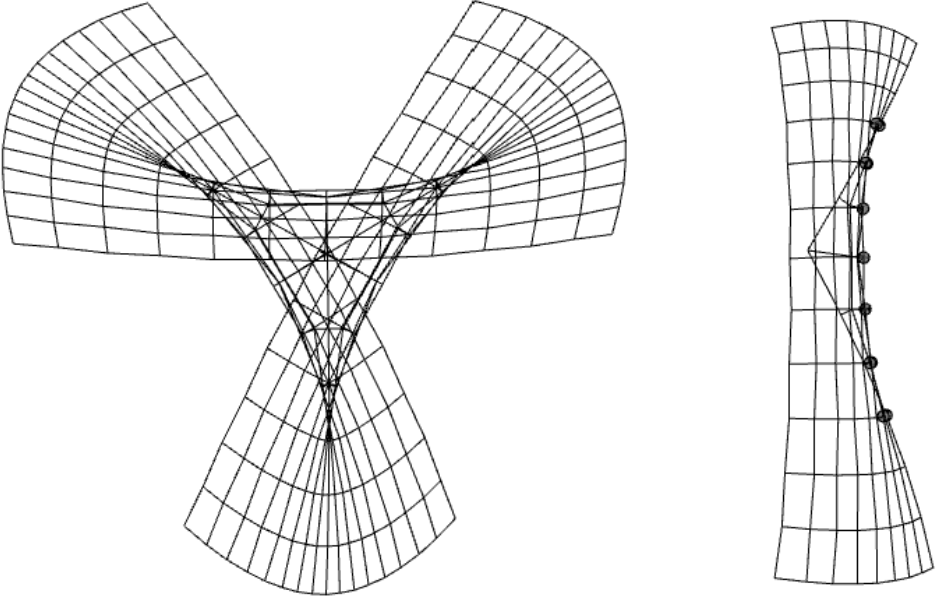} \\
\includegraphics[width=150mm]{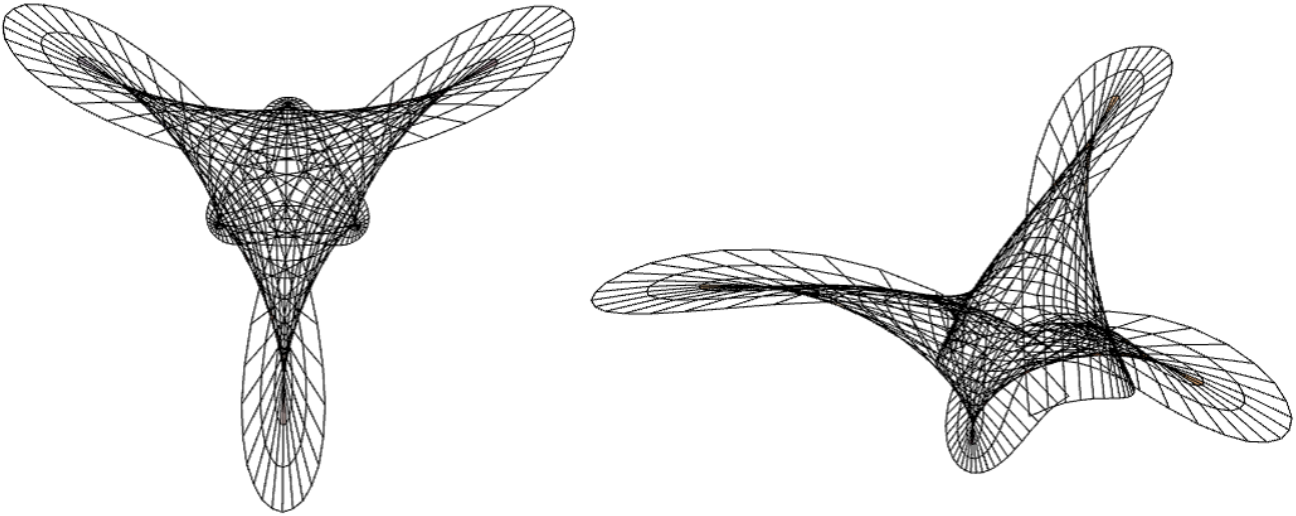}
\end{center}
\caption{From top to bottom: A discrete harmonic mean curvature $1$ surface in $\mathbb{S}^{2,1}$, a discrete flat surface in $\mathbb{S}^{2,1}$ and a discrete CMC 1 surface in $\mathbb{S}^{2,1}$, each shown twice. In order to draw the surfaces, we project to the hollow ball model for $\mathbb{S}^{2,1}$.  (For an explanation of the hollow ball model, see \cite{Fuji} for example.) }
\end{figure}

\begin{theorem}\label{thm7pt2next}
A quadrilateral of $\hat n$ as in Proposition \ref{thm7pt2} is singular for all 
$\lambda$ sufficiently close to zero if the 
corresponding circumcircle of $g$ intersects $\mathbb{S}^1$ 
transversally. The converse holds as well under the generic assumption that $\partial _{\lambda} \mathcal{H} \neq 0$.  
\end{theorem}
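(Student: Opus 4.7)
The plan is to apply Proposition \ref{thm7pt2} and analyze the sign of $\mathcal{H}$ near $\lambda = 0$ by comparing $\mathcal{H}_0 := \mathcal{H}|_{\lambda = 0}$ with a classical intersection discriminant between the circumcircle of $g_i, g_j, g_\ell$ (which passes through $g_k$ as well, by the principal net condition) and the unit circle $\mathbb{S}^1$. At $\lambda = 0$ we have $h_1^0 = (1 - |g_j|^2) |dg_{i\ell}|^2$, $h_2^0 = (1 - |g_\ell|^2) |dg_{ij}|^2$, and $h_3^0 = (1 - |g_i|^2) |dg_{j\ell}|^2$, so $\mathcal{H}_0$ is an explicit polynomial in the real and imaginary parts of $g_i, g_j, g_\ell$.

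Next I would write the circumcircle in the form $A(x^2 + y^2) + Bx + Cy + D = 0$, where $A, -B, C, -D$ are the natural $3 \times 3$ minors obtained by expanding the standard $4 \times 4$ concircularity determinant of $\{z, g_i, g_j, g_\ell\}$ along its first row. Substituting $x^2 + y^2 = 1$ reduces intersection with $\mathbb{S}^1$ to the line $Bx + Cy = -(A+D)$; elementary geometry shows this line meets $\mathbb{S}^1$ transversally in two points if and only if $(A+D)^2 - B^2 - C^2 < 0$, tangentially if and only if it vanishes, and not at all if and only if it is positive. The core step is then the polynomial identity
\[
\mathcal{H}_0 \;=\; 4 \bigl( (A + D)^2 - B^2 - C^2 \bigr),
\]
which I would establish by direct expansion of the minors (and which I have checked in several representative configurations, obtaining the exact factor $4$). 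Granting this identity, continuity of $\mathcal{H}$ in $\lambda$ yields the ``if'' direction at once: transversal intersection gives $\mathcal{H}_0 < 0$ strictly, so $\mathcal{H}_\lambda < 0$ for all $\lambda$ sufficiently close to $0$, and Proposition \ref{thm7pt2} makes the face singular. For the converse under the generic hypothesis $\partial_\lambda \mathcal{H}|_{\lambda = 0} \neq 0$: if the face is singular for all small $\lambda$, then $\mathcal{H}_\lambda \leq 0$ on both sides of $0$; the case $\mathcal{H}_0 > 0$ is excluded by continuity, and $\mathcal{H}_0 = 0$ is excluded because a nonzero first derivative forces $\mathcal{H}$ to change sign through $0$, so $\mathcal{H}_0 < 0$ strictly and the intersection is transversal.

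The principal obstacle is the polynomial identity relating $\mathcal{H}_0$ to $(A+D)^2 - B^2 - C^2$. A term-by-term expansion is routine but combinatorially heavy, so I would first attempt a structural proof: substituting $\bar w = 1/w$ into the $4 \times 4$ concircularity determinant for $\{g_i, g_j, g_\ell, w\}$ and clearing $w$ produces a quadratic in $w$ whose discriminant controls the number of roots on $\mathbb{S}^1$, and both $\mathcal{H}_0$ and $4((A+D)^2 - B^2 - C^2)$ should arise as this discriminant up to the same absolute constant, forcing the identity and explaining the factor $4$. If that approach becomes unwieldy, a backup is to pre-normalize by a M\"obius automorphism of $\mathbb{C}$ preserving $\mathbb{S}^1$ to place, say, $g_i = 0$, carrying out a reduced computation while tracking how $\mathcal{H}_0$ transforms (it is not $\mathrm{PSU}(1,1)$-invariant).
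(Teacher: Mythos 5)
Your proposal is correct and follows essentially the same route as the paper: evaluate $\mathcal{H}$ at $\lambda=0$, identify its sign with the discriminant for the circumcircle of $g$ meeting $\mathbb{S}^1$ transversally, and finish by continuity in $\lambda$ together with the hypothesis $\partial_\lambda\mathcal{H}\neq 0$ for the converse. The paper states the $\lambda=0$ condition as $(|p|^2-(r-1)^2)(|p|^2-(r+1)^2)<0$ in terms of the center $p$ and radius $r$, which is exactly your $(A+D)^2-B^2-C^2<0$ after dividing by the positive factor $4A^2$ (and your normalization $\mathcal{H}_0=4\bigl((A+D)^2-B^2-C^2\bigr)$ with the stated minors does check out), so the two arguments differ only in how the circle is parametrized.
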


\begin{proof}
If the four points $g_i, \ g_j, \ g_k, \ g_l$ lie on a circle with radius $r \in \mathbb{R}$ and center $p \in \mathbb{C}$, the condition for $\mathcal{H}<0$ at $\lambda =0$ is 
\[
(|p|^2 -(r-1)^2) (|p|^2 -(r+1)^2) <0 \ .
\]
The result follows.
\end{proof}

\begin{theorem}\label{cor:9pt3}
Let $p_-$, $p$ and $p_+$ be three consecutive vertices in one 
direction in the lattice domain of a 
CMC $1$ surface $\hat{n}$ with Weierstrass-type representation in $\mathbb{S}^{2,1}$, 
with corresponding values $g_-$, 
$g$ and $g_+$ for the discrete holomorphic function in the 
Weierstrass type representation. Under the genericity assumption $|g| \neq  1$, then $\kappa_{p_-p} \cdot 
\kappa_{pp_+} < 0$ for all $\lambda$ sufficiently close to zero if and 
only if exactly one of $|g_-|^2$ and $|g_+|^2$ has value less than 
$1$ and the other has value greater than $1$.  
\end{theorem}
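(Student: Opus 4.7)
The plan is to extract $\mathrm{sign}\bigl(\kappa_{p_-p,\hat n}\,\kappa_{pp_+,\hat n}\bigr)$ for small $\lambda$ directly from the explicit formula in Lemma \ref{princurv}, specialized to the value $t=-1$ that characterizes CMC $1$ surfaces $\hat n$ in $\mathbb{S}^{2,1}$ (this is the value at which Equation \eqref{BiLWeqn} forces $H_{\hat n}\equiv 1$, consistently with the smooth singularity condition $|g|=1$ listed in \S \ref{BrBi}). Setting $t=-1$ in Equation \eqref{lemmaEqn:aboutkappa} kills the $(1+t)$-terms and leaves
\[
\kappa_{i*,\hat f} \;=\; \frac{(1-|g_i|^2)(1-|g_*|^2)\,\lambda\,\alpha_{i*}}{2\,|dg_{i*}|^2 \,+\, (1-|g_i|^2)(1-|g_*|^2)\,\lambda\,\alpha_{i*}} \,.
\]
By Equation \eqref{extra3}, $\kappa_{i*,\hat n} = 1/\kappa_{i*,\hat f}$ has the same sign as $\kappa_{i*,\hat f}$, so it suffices to analyze $\kappa_{\hat f}$.

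For $\lambda$ sufficiently close to $0$ the denominator is dominated by the positive quantity $2|dg_{i*}|^2>0$ and is therefore itself positive, so
\[
\mathrm{sign}(\kappa_{i*,\hat f}) \;=\; \mathrm{sign}\bigl((1-|g_i|^2)(1-|g_*|^2)\,\lambda\,\alpha_{i*}\bigr) \,.
\]
Applying this at the two consecutive edges meeting at $p$ and multiplying,
\[
\mathrm{sign}\bigl(\kappa_{p_-p,\hat f}\,\kappa_{pp_+,\hat f}\bigr) \;=\; \mathrm{sign}\bigl((1-|g_p|^2)^2(1-|g_{p_-}|^2)(1-|g_{p_+}|^2)\,\lambda^2\,\alpha_{p_-p}\alpha_{pp_+}\bigr) \,.
\]
Now $\lambda^2>0$, and $(1-|g_p|^2)^2>0$ by the genericity hypothesis $|g_p|\neq 1$; furthermore $\alpha_{p_-p}$ and $\alpha_{pp_+}$ lie on edges of the same lattice direction, so under the standard sign convention for cross-ratio factorizing functions they share a common sign, giving $\alpha_{p_-p}\alpha_{pp_+}>0$. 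Hence the sign of the product reduces to $\mathrm{sign}\bigl((1-|g_{p_-}|^2)(1-|g_{p_+}|^2)\bigr)$, which is negative precisely when exactly one of $|g_{p_-}|^2$, $|g_{p_+}|^2$ is less than $1$ and the other greater than $1$. This proves both implications simultaneously.

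The only point that needs a moment's care is the small-$\lambda$ sign analysis: since $dg_{i*}\neq 0$ by the standing genericity assumption, the denominator of $\kappa_{i*,\hat f}$ is a continuous, nonvanishing function of $\lambda$ at $\lambda=0$, so on some neighborhood of $0$ it has constant positive sign and does not produce sign changes. The hypothesis $|g_p|\neq 1$ is essential here: without it the leading $\lambda^2$-coefficient degenerates and a higher-order expansion in $\lambda$ would be required, which reflects the fact that $|g_p|=1$ is precisely the locus of singular behavior in the smooth model.
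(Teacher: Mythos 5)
Your proposal is correct and follows essentially the same route as the paper, whose proof consists precisely of setting $t=-1$ in Equation \eqref{lemmaEqn:aboutkappa} and invoking Equation \eqref{extra3}; you have simply written out the small-$\lambda$ sign analysis that the paper leaves implicit. (Your appeal to a ``sign convention'' for $\alpha_{p_-p}\alpha_{pp_+}>0$ is in fact a consequence of the defining condition $\alpha_{ij}/\alpha_{i\ell}<0$ on every quadrilateral, which forces all horizontal $\alpha$'s to share one sign and all vertical $\alpha$'s the opposite sign.)
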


\begin{proof}
Because the surface is CMC $1$ in $\mathbb{S}^{2,1}$, we have 
$t=-1$.  Then Equations \eqref{lemmaEqn:aboutkappa} and \eqref{extra3} imply the result.  
\end{proof}

\begin{figure}
\begin{center}
\includegraphics[width=100mm]{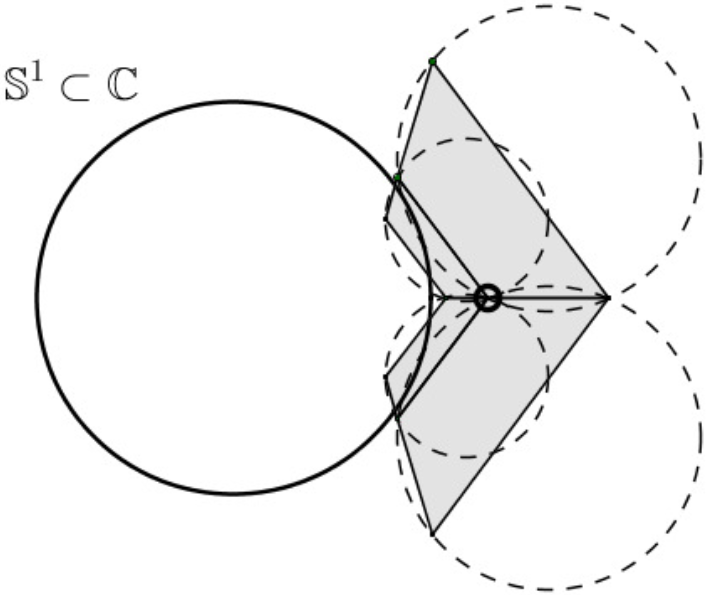}
\end{center}
\caption{A counterexample to the converse in Corollary \ref{SingFaceVert}. Numerical data for a discrete holomorphic function is shown. The four faces of a discrete CMC 1 surface determined from the four gray faces above are singular faces. On the other hand, for sufficiently small $\lambda$, the marked vertex is not singular. }
\label{fig7}
\end{figure}

This theorem tells us that we will find FPS vertices roughly 
where $g$ (discretely) crosses $\mathbb{S}^1$.  Because of 
Theorem \ref{cor:9pt3}, we can now regard 
these points as singular vertices and not parabolic nor flat points. Combining Theorems \ref{thm7pt2next} and \ref{cor:9pt3}, the following rigorous statement is immediate: 

\begin{corollary}\label{SingFaceVert}
Under the conditions of Theorem \ref{cor:9pt3}, 
for all $\lambda$ sufficiently close to zero, 
the pair of faces adjacent to the edge $p_-p$ are singular, or 
the pair of faces adjacent to the edge $pp_+$ are singular, 
including the possibility that all four faces are singular.  
\end{corollary}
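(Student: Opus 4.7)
The plan is to directly combine Theorem \ref{cor:9pt3} with Theorem \ref{thm7pt2next}. First I would apply Theorem \ref{cor:9pt3}: the hypothesis $\kappa_{p_-p} \cdot \kappa_{pp_+} < 0$, valid for all sufficiently small $\lambda$, is equivalent to the statement that exactly one of $|g_{p_-}|$ and $|g_{p_+}|$ is strictly less than $1$ and the other strictly greater. Without loss of generality, I would assume $|g_{p_-}| < 1 < |g_{p_+}|$; the opposite configuration is identical after relabeling.

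Next I would invoke the genericity assumption $|g_p| \neq 1$ to split into two cases. If $|g_p| > 1$, then the two $g$-values at the endpoints of the edge $p_-p$ lie strictly on opposite sides of the unit circle $\mathbb{S}^1$. If instead $|g_p| < 1$, then the endpoints of $pp_+$ straddle $\mathbb{S}^1$ in the same sense. Either way, at least one of the two edges (possibly both, depending on where $|g_p|$ sits) has the property that its $g$-endpoint values lie on opposite sides of $\mathbb{S}^1$.

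Each of the two quadrilateral faces sharing a straddling edge has the two straddling $g$-values among the four points on its circumcircle. Since a circle in $\mathbb{C}$ that contains one point strictly inside and one point strictly outside the unit disk must intersect $\mathbb{S}^1$ transversally (tangency between two circles forces one of them to lie entirely in the closed disk bounded by the other, contradicting the inside/outside configuration), Theorem \ref{thm7pt2next} applies to each such face and yields singularity of the face for all $\lambda$ sufficiently close to zero. Because both faces adjacent to the straddling edge acquire this property, we obtain the required alternative, including the case when both edges straddle and all four faces are singular.

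The only genuinely delicate point, and what I regard as the main obstacle, is the transversality step that feeds Theorem \ref{thm7pt2next}; I would want to confirm the elementary planar fact that an ``inside/outside'' pair on a circle forces transverse intersection with $\mathbb{S}^1$ and to check that no edge case (e.g.\ a degenerate circumcircle) slips past the genericity hypotheses already in force. Everything else is essentially bookkeeping on top of Theorems \ref{thm7pt2next} and \ref{cor:9pt3}.
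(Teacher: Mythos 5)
Your proposal is correct and follows exactly the route the paper intends: the paper simply declares the corollary ``immediate'' from combining Theorems \ref{thm7pt2next} and \ref{cor:9pt3}, and your write-up supplies the details it omits (the straddling-edge case split via $|g_p|\neq 1$, and the elementary fact that a circle meeting both the inside and the outside of the unit disk must cross $\mathbb{S}^1$ transversally, which is indeed valid since a tangent circle or line lies entirely on one side of $\mathbb{S}^1$ away from the tangency point). No gaps.
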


The converse of this corollary does not hold, that is, it is possible 
to have four singular faces (for all $\lambda$ sufficiently close to $0$) adjacent to a given vertex that is non-singular for all $\lambda$ sufficiently close to $0$ (see Figure \ref{fig7}). Furthermore, taking $\lambda \to 0$, the example in Figure \ref{fig7} demonstrates that the converse to Theorem \ref{thm8pt4} also does not hold.

\section*{Acknowledgements}
The first author was partly supported by the Grant-in-Aid for Scientific Research (C) 15K04845 and (S) 24224001, Japan Society for the Promotion of Science. The second author was supported by the Grant-in-Aid for JSPS Fellows Number 26-3154, and the JSPS Program for Advancing Strategic International Networks to Accelerate the Circulation of Talented Researchers Mathematical Science of Symmetry, Topology and Moduli, Evolution of International Research Network based on OCAMI. Both authors were supported by the Grant-in-Aid for JSPS Fellows Number 26-3154, JSPS/FWF bilateral joint project ``Transformations and Singularities'' between Austria and Japan.



\begin{thebibliography}{99}



%
\bibitem{AA} R. Aiyama and K. Akutagawa, {\em Kenmotsu-Bryant type representation formulas for constant mean curvature 
surfaces in $\mathbb{H}^3(-c^2)$ and $\mathbb{S}^3_1(c^2)$}, Ann. Global Anal. Geom. (1) {\bf 17} (1998), 49-75.
%
\bibitem{AE} J. A. Aledo and J. M. Espinar, {\em A conformal representation 
   for linear Weingarten surfaces in
   the de Sitter space}, J. Geom. and Phys. {\bf 57} (2007), 1669-1677.
%
\bibitem{BobPink2}
   A. Bobenko and U. Pinkall, 
   {\itshape Discrete isothermic surfaces}, J. Reine Angew. Math., {\bf 475} 
   (1996), 187--208. 
%
\bibitem{BPW-MathAnn}
   A. Bobenko, H. Pottmann and J. Wallner, 
   {\itshape A curvature theory for discrete surfaces based on mesh 
    parallelity}, Math. Ann. 348 (2010), 1-24.  
%
\bibitem{bs-book}
   A. Bobenko and Y. Suris, 
   {\itshape Discrete differential geometry, integrable structure}, 
   Graduate Textbooks in Mathematics {\bf 98}, A.M.S., 2008.  
%
\bibitem{Bry} R. Bryant, 
   {\itshape Surfaces of mean curvature one in hyperbolic $3$-space}, 
   Asterisque {\bf 154-155} (1987), 321--347.
%
\bibitem{BHR1} F. E. Burstall, U. Hertrich-Jeromin and W. Rossman, 
   {\em Lie geometry of flat fronts in hyperbolic space}, 
   C. R. Acad. Sci. Paris, Ser. I {\bf 348} (2010), 661--664.  
%
\bibitem{BHR2} \bysame, 
   {\em Lie geometry of linear Weingarten surfaces}, 
   C. R. Acad. Sci. Paris, Ser. I {\bf 350} (2012), 413-416.  
%
\bibitem{BHR3} \bysame, 
   {\em Discrete linear Weingarten surfaces}, to appear in Nagoya Math J.  
%
\bibitem{BHRS} F. E. Burstall, U. Hertrich-Jeromin, W. Rossman and S. Santos,
   {\em Discrete surfaces of constant mean curvature}, 
	 RIMS Kokyuroku {\bf 1880}, 
	 2014, 133-179.  
\bibitem{BHRS2} \bysame,
   {\em Discrete special isothermic surfaces}, Geom. Dedic.
   {\bf 174}, 2015, 1-11.  
%
\bibitem{Fuji} S. Fujimori, {\em Spacelike CMC $1$ surfaces with elliptic ends in de Sitter 3-Space}, Hokkaido Math. J. {\bf 35} (2006), 289-320.
%
\bibitem{FSUY} S. Fujimori, K. Saji, M. Umehara and K. Yamada, {\em Singularities of maximal surfaces}, 
   Math. Zeit. {\bf 259} (2008), 827-848.
%
\bibitem{GMM-first} J. A. G\'alvez, A. Mart\'{\i}nez and F. Mil\'an, 
   {\em Flat surfaces in hyperbolic $3$-space}, 
   Math. Ann. {\bf 316} (2000), 419--435.
%
\bibitem{GMM-second} \bysame, 
   {\em Complete linear Weingarten surfaces of Bryant type, a Plateau problem at infinity}, 
   Trans. A.M.S. {\bf 356} (2004), 3405-3428.
%
\bibitem{Udo1} U. Hertrich-Jeromin, {\em Transformations of discrete 
   isothermic nets and discrete cmc-$1$ surfaces in hyperbolic space}, 
   Manusc. Math. {\bf 102} (2000), 465--486.  
%
\bibitem{Udo-bk} \bysame, 
   {\itshape Introduction to M\"obius differential geometry}, 
   London Mathematical Society Lecture Note Series {\bf 300}, 2003.  
%
\bibitem{HHP} U. Hertrich-Jeromin, T. Hoffmann and U. Pinkall, 
   {\itshape A discrete version of the Darboux transform for isothermic surfaces}, Oxf. Lect. Ser. Math. Appl. {\bf 16} (1999), 59-81. 
%
\bibitem{HRSY}
   T. Hoffmann, W. Rossman, T. Sasaki, M. Yoshida, {\em Discrete flat 
   surfaces and linear Weingarten surfaces in hyperbolic $3$-space}, 
   Trans. A.M.S. {\bf 364} (2012), 5605--5644.  
%
\bibitem{IS} S. Izumiya and K. Saji, {\em The mandala of Legendrian dualities for pseudo-spheres in Lorentz-Minkowski space and ``flat'' spacelike surfaces}, J. of Singularities {\bf 2} (2010), 92-127.
%
\bibitem{KR-ocami}
Y. Kinoshita and W. Rossman, 
{\em Isothermicity of discrete surfaces in the Euclidean and Minkowski     $3$-spaces}, OCAMI publications {\bf 3} (2010), 1-13.  
%
\bibitem{Ko} O. Kobayashi, {\em Maximal surfaces in the $3$-dimensional Minkowski space $\mathbb{L}^3$}, Tokyo J. Math. {\bf 6} (1983), 297-309.
%
\bibitem{KU} M. Kokubu and M. Umehara, {\em Orientability of linear 
   Weingarten surfaces, spacelike CMC‐$1$ surfaces and maximal surfaces},
   Math. Nachr. {\bf 284}, 14‐15 (2011), 1903-1918.
%
\bibitem{OY} Y. Ogata and M. Yasumoto, {\em Construction of discrete constant mean curvature surfaces in Riemannian spaceforms and applications}, preprint.
%
\bibitem{r3x} W. Rossman, 
   {\em Discrete Constant Mean Curvature Surfaces via Conserved Quantities},
   Math for Industry GCOE Lecture Note, Kyushu Univ., Vol. {\bf 25} (2010), 1--130.  
%
\bibitem{wisky-next} 
   \bysame, {\itshape Isothermic surfaces in M\"{o}bius and 
Lie sphere geometries}, 
   Rokko Lec. Series Math. {\bf 22} (2014), 1--138. 
%
\bibitem{tera1} K. Teramoto, {\em Parallel and dual surfaces of cuspidal edges},  Diff. Geom. Appl. {\bf 44} (2016), 52-62.
%
\bibitem{tera2} K. Teramoto, {\em Principal curvatures of wave fronts}, preprint.  
%
\bibitem{UY-CMC1} M. Umehara, K. Yamada, {\em Complete surfaces of constant mean curvature $1$ in the hyperbolic 3-space}, Annals of Math. {\bf 137} (1993), 611-638.
%
\bibitem{UY-max} M. Umehara, K. Yamada, {\em Maximal surfaces with singularities in Minkowski space}, Hokkaido Math. J. {\bf 35} (2006), 13-40.
%
\bibitem{Wunderlich} 
   W. Wunderlich, {\itshape Zur Differenzengeometrie der Flachen konstanter 
   negativer Krummung}, 
   Osterreich. Akad. Wiss. Math.-Nat. 
   Kl. S.-B. IIa., {\bf 160} (1951), 39--77. 
%
\bibitem{Yashi2014} M. Yasumoto, {\em Discrete maximal surfaces with singularities in Minkowski space}, Diff. Geom. Appl. {\bf 43} (2015), 130-154. .  
\end{thebibliography}
\end{document}